\theoremstyle{plain}
\newtheorem{thm}{Theorem}[section]
\newtheorem{prop}[thm]{Proposition}
\newtheorem{cor}[thm]{Corollary}
\newtheorem{lem}[thm]{Lemma}
\theoremstyle{definition}
\newtheorem{exa}[thm]{Example}
\newtheorem{defn}[thm]{Definition}
\def\Ker{\mathop{\mathrm{Ker}}\nolimits}
\newcommand{\lra}{\longrightarrow}
\newcommand{\ra}{\rightarrow}
\newcommand{\Z}{{\Bbb Z}}
\newcommand{\pc}[2]{\mbox{$\begin{array}{c}
\includegraphics[scale=#2]{#1.eps}
\end{array}$}}
\begin{document}
\large
\begin{center}{\bf\Large Milnor invariants via unipotent Magnus embeddings}
\end{center}
\vskip 1.5pc

\begin{center}{\Large Hisatoshi Kodani\footnote{
E-mail address: {\tt kodani@mpim-bonn.mpg.de}} and Takefumi Nosaka\footnote{
E-mail address: {\tt nosaka@math.titech.ac.jp}
}}\end{center}
\vskip 1pc
\begin{abstract}\baselineskip=12pt \noindent
We reconfigure the Milnor invariant of links in terms of central group extensions and unipotent Magnus embeddings.
We also develop a diagrammatic computation of the invariant
and compute the first non-vanishing invariants of Milnor link and of several other links.
Moreover, we refine the original Milnor invariants of higher degree. 
\end{abstract}

\begin{center}
\normalsize
{\bf Keywords}: \ \ \ Knot, Milnor invariant, nilpotent group, Magnus expansion \ \ \
\end{center}

\large
\baselineskip=16pt
\section{Introduction.}In landmark papers in the 1950's \cite{Mil1,Mil2}, Milnor introduced higher order linking numbers.
The Milnor invariants have become increasingly well understood from a topological perspective,
e.g., in terms of higher Massey products, link concordance, nilpotent DGA, and finite type invariants (see \cite{Coc,Hil,HM,IO,Stein} and references therein).
However, there are only few methods for computing the invariants that are applicable to arbitrary links with many crossings.
In fact, the original definition strongly depends on group presentation of the individual longitudes of each link component, and
it seems difficult to compute the invariants from
the nilpotent noncommutativity appearing in these papers. 
In \S \ref{Srev}, we give a review of Milnor invariants and point out five difficulties in the previous computations.

This paper develops a diagrammatic computation of Milnor invariants; see Theorem \ref{thm1}.
The point here is to reformulate the invariant in terms of central group extensions, 
and to 
use the unipotent Magnus expansion \cite{GG}, which deals with nilpotent groups as a matrix group over a commutative ring $\Omega_m$ (see \S \ref{Semb} for the definition and properties).
Moreover, this paper is inspired by the quandle cocycle invariant in \cite{CEGS} (see also Appendix \ref{App2}), which sometimes provides a computation without longitudes.
To summarize, our computations do not require an explicit presentation for the longitudes, and therefore are compatible with computer programs (see \S\S \ref{Sthm}--\ref{Sexa} for details).
In fact, Section \ref{Sexa} gives some examples of such computations.

As a corollary, 
we refine the higher Milnor $\bar{\mu}$-invariants; see \S \ref{SHigher}.
The original definition has indeterminacy 
modulo certain ideals ``$\Delta(I)$"$\subset \Z$;
the rational parts always vanish, and it happens that all the higher $\bar{\mu}$-invariants are zero.
However, we alternatively introduce weaker ideals of the ring $\Omega_m$, and modify the higher invariants modulo these ideals (Proposition \ref{oo}), and show a universality result (Theorem \ref{thm5}).
We see (\S \ref{Sb31}) that the refined invariants are rationally non-trivial and
can detect some links.

In conclusion, the (higher) Milnor invariants become computable (isotopy-)invariants of links,
and would provide future challenges as in the perspectives mentioned above.

\section{Review: the first non-vanishing Milnor invariant of links.}\label{Srev}
To review the invariant, we begin by setting groups, which will be used throughout this paper. 
For a group $G$, we let $\Gamma_1 G$ be $G$,
and denote by $\Gamma_m G$ the commutator $ [\Gamma_{m-1} G,G]$ by induction.
Let $F$ be the free group of rank $q$.
We denote by $Q_m$ the quotient group $\Gamma_{m-1}F /\Gamma_{m}F$ with $m \geq 2 $.
Accordingly, we have the central extension,
\begin{equation}\label{kihon2} 0 \lra Q_m \lra F/\Gamma_{m}F \xrightarrow{\ \ p_{m-1}\ \ }F/\Gamma_{m-1}F \lra 1 \ \ \ \ \ (\mathrm{central \ extension})
. \end{equation}
The abelian kernel $Q_n$ is known to be free with a finite basis; see, e.g., \cite[Theorem 1.5]{CFL}.

Next, let us review the $m$-th leading terms of Milnor invariants according to \cite{Mil1,IO}.
We suppose that the reader has some knowledge of knot theory, as in \cite{Hil}.
Let us fix a link $L \subset S^3$ with $q$ components and preferred longitudes $\mathfrak{l}_{\ell} $ for $ \ell \leq q $ as elements of $ \pi_1 (S^3 \setminus L)$. 
In addition, let $f_2 : \pi_1 (S^3 \setminus L) \ra F/\Gamma_2 F=\Z^q $ be the abelianization $\mathrm{Ab}$.
Furthermore, for $m \in \mathbb{N}$, we assume:

\begin{itemize}
\item
\noindent
\textbf{Assumption $\mathcal{A}_m$}.
There are homomorphisms $f_k : \pi_1 (S^3 \setminus L) \ra F/\Gamma_k F $ for $k$ with $k \leq m$, which satisfy the commutative diagram 
$$
{\normalsize
\xymatrix{
\pi_1 (S^3 \setminus L) \ar[d]_{f_2} \ar[dr]_{f_3} \ar[drrr]_{f_4}^{} \ar[drrrrrr]^{f_m}_{\!\!\!\!\!\!\!\!\!\!\!\!\!\!\!\!\!\!\!\!\!\!\!\!\!\!\!\! \cdots \cdots }& & \\
F/\Gamma_2 F& F/\Gamma_3 F \ar[l]^{p_2 }& &F/\Gamma_4 F \ar[ll]^{p_3} &\cdots \cdots \ar[l] & & F/\Gamma_{m} F \ar[ll]^{p_{m-1} }.
}}$$
\end{itemize}

\noindent
Here, we should remark that if there is another extension $f_m' $ instead of $f_{m}$, then $f_m$ equals $f_m'$ up to conjugacy, by centrality.
Further, since the centralizer subgroup of $[x_i]$ in $ F/\Gamma_{m} F $ is known to be $\{ x_i^k\}_{k \in \Z} \times Q_m$,
the map $f_m$ sends every longitude $ \mathfrak{l}_{\ell} $ to the central subgroup $ Q_m $, up to a factor of $f_m( \mathfrak{m}_{\ell} ) ^{\pm 1 } $.
Then, the $q$-tuple
$$ \bigl( f_m( \mathfrak{l}_1 ), \dots, f_m( \mathfrak{l}_{q} ) \bigr) \in (Q_m)^q $$
is called {\it the first non-vanishing Milnor $\mu $-invariant} or {\it the $m $-th Milnor $\mu $-invariant }of $L $. 
This $\mu $-invariant is known to be a complete obstruction for lifting $f_{m}$. More precisely,
\begin{prop}[\cite{Mil2}]\label{ea211} Suppose Assumption $\mathcal{A}_m$.
Then, $f_m$ admits a lift $ f_{m+1} : \pi_1 (S^3 \setminus L) \ra F/\Gamma_{m+1} F $
if and only if all $ m $-th Milnor invariants vanish, i.e., $ f_m( \mathfrak{l}_{\ell} ) =0 \in Q_m $.
\end{prop}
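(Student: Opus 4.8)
\emph{Proof plan.} Write $\pi:=\pi_1(S^3\setminus L)$, set $N:=F/\Gamma_{m+1}F$ and $\bar N:=F/\Gamma_mF$, and let $A:=Q_{m+1}=\Ker(p_m\colon N\to\bar N)$, which is central in $N$ by (\ref{kihon2}). The plan is to recognise the lifting problem for $f_m$ as a splitting problem for a central extension of $\pi$, to detect the resulting obstruction class on the peripheral tori of the link exterior, and then to evaluate it via the free Lie ring $\gr F=\bigoplus_{j\ge 1}\Gamma_jF/\Gamma_{j+1}F$. Pulling back the central extension $1\to A\to N\xrightarrow{p_m}\bar N\to 1$ along $f_m$ produces a central extension $1\to A\to E\to\pi\to 1$, and $f_m$ lifts to some $f_{m+1}\colon\pi\to N$ precisely when this extension splits, i.e.\ precisely when its class $\theta\in H^2(\pi;A)$ vanishes. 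Since $H_1(\pi)\cong\Z^q$ is free, the universal coefficient theorem identifies $H^2(\pi;A)$ with $\Hom(H_2(\pi),A)$. Moreover, the Mayer--Vietoris sequence for the decomposition of $S^3$ into the link exterior and a tubular neighbourhood of $L$ (a disjoint union of solid tori) shows that $H_2$ of the link exterior is generated by the classes $[T_1],\dots,[T_q]$ of the peripheral tori, whence by Hopf's theorem the same holds for $H_2(\pi)$. Consequently $\theta=0$ if and only if, for every $\ell$, the restriction of $\theta$ along $\pi_1 T_\ell=\langle\mathfrak{m}_\ell,\mathfrak{l}_\ell\rangle\cong\Z^2$ vanishes in $H^2(\pi_1 T_\ell;A)$.

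Next I would compute those peripheral restrictions. Under the standard isomorphism $H^2(\Z^2;A)\cong A$---a central extension of $\Z^2$ being determined by the commutator of lifts of its two generators---the $\ell$-th peripheral restriction of $\theta$ is $[\,\widetilde{f_m(\mathfrak{m}_\ell)},\,\widetilde{f_m(\mathfrak{l}_\ell)}\,]\in A$ for any choice of lifts to $N$. Now $f_m(\mathfrak{m}_\ell)$ differs from the $\ell$-th generator $x_\ell$ of $\bar N$ only by an element of $\Gamma_2F/\Gamma_mF$, while $f_m(\mathfrak{l}_\ell)$ lies in $Q_m=\Gamma_{m-1}F/\Gamma_mF$; a direct manipulation with commutator identities in $N$ (all correction terms landing in $\Gamma_{m+1}F$) then identifies this commutator with the class modulo $\Gamma_{m+1}F$ of $[x_\ell,v_\ell]$, where $v_\ell\in\Gamma_{m-1}F$ is any lift of $f_m(\mathfrak{l}_\ell)$. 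In other words, the $\ell$-th peripheral restriction of $\theta$ equals $\mathrm{ad}_{x_\ell}\bigl(f_m(\mathfrak{l}_\ell)\bigr)\in Q_{m+1}$, where $\mathrm{ad}_{x_\ell}\colon Q_m\to Q_{m+1}$ is bracketing with $x_\ell$ in $\gr F$.

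To conclude it then suffices to know that $\mathrm{ad}_{x_\ell}\colon Q_m\to Q_{m+1}$ is injective for $m\ge 3$. This is the classical fact that the centraliser of a basis element of $\Gamma_1F/\Gamma_2F$ in the free Lie ring $\gr F$ is exactly the rank-one subgroup it spans (seen, for instance, by embedding the rational free Lie algebra into the free associative algebra, in which the centraliser of a single variable is its polynomial subalgebra): hence $\Ker(\mathrm{ad}_{x_\ell})$ meets the degree-$(m-1)$ part $Q_m$ trivially once $m-1\ge 2$. Combining the steps, $\theta=0$ if and only if $\mathrm{ad}_{x_\ell}(f_m(\mathfrak{l}_\ell))=0$ for all $\ell$, which holds if and only if $f_m(\mathfrak{l}_\ell)=0\in Q_m$ for all $\ell$; this is the asserted equivalence. (The remaining case $m=2$, where $Q_2\cong\Z^q$ and $f_2(\mathfrak{l}_\ell)$ records the pairwise linking numbers $\mathrm{lk}(L_\ell,L_k)$, follows from the same argument upon noting that $f_2(\mathfrak{l}_\ell)$ has no $x_\ell$-coordinate, so that $\mathrm{ad}_{x_\ell}$ is injective on the relevant summand $\bigoplus_{k\ne\ell}\Z x_k$.)

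The step I expect to be the main obstacle is the second one: one must verify both that the obstruction to lifting $f_m$---a priori a cohomology class on all of $\pi$---is faithfully recorded on the peripheral subgroups, and that this peripheral datum translates exactly into the Lie bracket $\mathrm{ad}_{x_\ell}(f_m(\mathfrak{l}_\ell))$. Once this bridge between the topology of the link exterior and the arithmetic of the free Lie ring is in place, the first step is routine homological algebra and the third is a standard structural fact about free Lie rings.
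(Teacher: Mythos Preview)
Your argument is correct and takes a genuinely different route from the paper. The paper does not give a standalone proof of Proposition~\ref{ea211}---it is quoted from \cite{Mil2}---but its own machinery reproves it: the ``only if'' direction is the easy remark preceding the proposition (if $f_{m+1}$ exists then $f_m(\mathfrak{l}_\ell)=p_m(f_{m+1}(\mathfrak{l}_\ell))$ with $f_{m+1}(\mathfrak{l}_\ell)$ in the centralizer of $x_\ell$ in $F/\Gamma_{m+1}F$, hence $f_m(\mathfrak{l}_\ell)=0$), while the ``if'' direction is Corollary~\ref{thm2}, which builds the lift $f_{m+1}$ \emph{explicitly} arc-by-arc from the Wirtinger presentation and checks that the last relation on each component closes up exactly when $\Psi_m(j)=0$. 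You instead package the lifting problem as the vanishing of an obstruction class $\theta\in H^2(\pi;Q_{m+1})$, use Mayer--Vietoris plus Hopf's formula to show $\theta$ is detected on the peripheral tori, and identify each peripheral restriction with $\mathrm{ad}_{x_\ell}\bigl(f_m(\mathfrak{l}_\ell)\bigr)$. The two arguments converge on the same algebraic hinge: your injectivity of $\mathrm{ad}_{x_\ell}\colon Q_m\to Q_{m+1}$ is precisely the injectivity of the paper's map $\mathcal{I}_j$ in Theorem~\ref{thm1} (compare \eqref{iij} and \eqref{ghgh}). What your route buys is conceptual transparency---the obstruction is visibly a $2$-cocycle living on the boundary tori---while the paper's route buys an explicit formula for the lift, which is the whole point of its computational programme.
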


In closing this section, in contrast to many studies of the Milnor $\mu $-invariant,
we should emphasize that there have been five difficulties in concretely computing the invariant:

\begin{enumerate}[(I)]
\item The quotient $F/\Gamma_{m} F$ must be quantitatively investigated.
Many papers on the Milnor invariant used the original Magnus expansion in the non-commutative power series ring $\Z\langle \! \langle X_1, \dots, X_q \rangle\! \rangle$,
which leads to a difficulty computing $ f_m( \mathfrak{l}_{\ell} )$; see \cite{Mil2,IO}.
\item The next one is complexity to present the longitude $ \mathfrak{l}_{\ell} $. 
As a solution, 
Milnor \cite[\S 3]{Mil2} (see \eqref{qq} later or \cite[Chapters 11--14]{Hil}) showed
that each $f_m (\mathfrak{l}_\ell)$ can be formulated as a word of meridians $ f_m (\mathfrak{m}_1) , \dots, f_m (\mathfrak{m}_{\# L})$
in $F/\Gamma_{m} F$, and he suggested an algorithm; however,
the algorithm becomes exponentially complicated as $m$ and $q$ increase.

\item Explicitly describing these $f_m$'s has been considered to be difficult, because of the non-commutativity of $F/\Gamma_m F$.

\item Milnor \cite{Mil2} originally defined the $\mu$-invariant with respect to a sequence $I \subset \{ 1,2,\dots, q\}^m$. 
However, the relation between the invariant and the sequence $I$ is quite complicated;
see \cite{HL}. In fact the relations given by Milnor actually form a compete and explicit set of relations. 

\item Concerning higher Milnor invariants, the definition seems rather intricate and overly algebraic (see \cite{Mil2,Hil,Stein}).
Moreover, it happens in many cases that all the higher invariants are zero (see, e.g., \cite[Table A]{Stein}), and
it is hard to check how strong the invariants are and whether they are trivial or not.
\end{enumerate}

\section{Unipotent Magnus embedding.}\label{Semb}
The key to overcoming the above difficulties is the unipotent Magnus embedding \cite{GG}, which is a faithful linear representation of $ F/\Gamma_m F $. Here, we study the embedding:
we denote by $\Omega_m$ the commutative polynomial ring $\Z[\lambda_{i}^{(j)}]$
over commuting indeterminates $ \lambda_{i}^{(j)}$ with $ i \in \{ 1,2, \dots, m-1 \}, \ j \in \{ 1, \dots, q\}$.
Let $I_m$ be the identity matrix of rank $m$, and let $E_{i,j}$ be a matrix with 1 in the $(i,j)$-th position and zeros elsewhere.
Moreover, we define
$$\Upsilon_m : F \lra GL_m(\Omega_m ) $$
as a homomorphism by setting
$$ \Upsilon_m(x_j)= \left(
\begin{array}{cccccc}
1 & \lambda_{1}^{(j)} & 0& \cdots & 0\\
0 & 1& \lambda_{2}^{(j)} & \cdots & 0\\
\vdots & \vdots & \ddots & \ddots & \vdots \\
0&0 & \cdots & 1 & \lambda_{m-1}^{(j)} \\
0& 0& \cdots & 0 & 1
\end{array}
\right).
$$
As is known \cite{GG}, $\Upsilon_m ( \Gamma_m F)=\{ I_m\}$, and the induced map $ F/\Gamma_{m} F \ra GL_m( \Omega_m) $ is injective.
Thus, we have an isomorphism $F/\Gamma_{m} F \cong \mathrm{Im}(\Upsilon_m). $
Moreover, $y \in F/\Gamma_{m} F $ lies in the center $Q_m$, if and only if $\Upsilon_m(y)$ equals $I_m + \omega E_{1,m}$ for some $\omega \in \Omega_m$;
thus it is easy to deal with the center $Q_m$, via $\Upsilon_m. $
In this paper, we call the map $\Upsilon$ {\it the unipotent Magnus embedding}.

Next, in order to describe $\Upsilon_m $ in details (Lemma \ref{lemapp1} as in a Taylor expansion), let us review the Fox derivative; see, e.g., \cite{CFL}.
Namely, for each $x_k $ with $k \in \{1,\dots, q\}$, we define a map $ \frac{\partial \ \ }{\partial x_k} : F \ra \Z [F ]$
with the following two properties:
\[ \frac{\partial x_i}{\partial x_k} = \delta_{i,k}, \ \ \ \ \ \ \ \ \ \ \frac{\partial (uv)}{\partial x_k} = \frac{\partial u }{\partial x_k} \varepsilon (v ) +u \frac{\partial v}{\partial x_k}, \ \ \ \ \ \ \mathrm{for \ all \ } u, v \in F . \]
Here, $\varepsilon$ is the augmentation $ \Z [F] \ra \Z$. Further, for $ y \in F$, we define the higher derivative
$$ \frac{\partial^{n} y}{\partial x_{i_1}\cdots \partial x_{i_{n}}} = \frac{\partial \ \ }{\partial x_{i_1}} \Bigl( \frac{\partial^{n-1} y}{\partial x_{i_2}\cdots \partial x_{i_{n}} } \Bigr) $$
by induction on $n$. For short, we often abbreviate it as $D_{i_1 \cdots i_n}(y)$. 
\begin{lem}\label{lemapp1} For any $y \in F$, the image $\Upsilon_m (y)$ is formulated as
$$ \scalebox{0.8}{$
\begin{pmatrix}
1&\displaystyle{\sum_{k_1}}\varepsilon\left(D _{k_1}(y) \right)\lambda_{1}^{(k_1)}& \displaystyle{\sum_{k_1,k_2}}\varepsilon\left(D_{k_1 k_2}(y)\right)\lambda_{1}^{(k_1)}\lambda_{2}^{(k_2)}&\cdots &\ast
&\displaystyle{\sum_{k_1,\dots, k_{m-1}}}\varepsilon\left(D_{k_1 \cdots k_{m-1}}(y)\right)\lambda_{1}^{(k_1)}\cdots \lambda_{m-1}^{(k_{m-1})}\\
0&1&\displaystyle{\sum_{k_2}}\varepsilon\left( D _{k_2}(y) \right)\lambda_{2}^{(k_2)}&\cdots &\ast&\displaystyle{\sum_{k_2,\dots, k_{m-1}}}\varepsilon\left(D_{k_2 \cdots k_{m-1}}(y)\right)\lambda_{2}^{(k_2)}\cdots \lambda_{m-1}^{(k_{m-1})}\\
0&0&1&\cdots &\ast &\displaystyle{\sum_{k_3,\dots, k_{m-1}}}\varepsilon\left(D_{k_3 \cdots k_{m-1}}(y)\right)\lambda_{3}^{(k_3)}\cdots \lambda_{m-1}^{(k_{m-1})}\\
\vdots&\vdots &\vdots &\ddots&\vdots&\vdots\\
0&0&0&\cdots0&1&\displaystyle{\sum_{k_{m-1}}}\varepsilon\left(D_{k_{m-1}}(y)\right)\lambda_{m-1}^{(k_{m-1})}\\
0&0&0&\cdots &0&1
\end{pmatrix}$}.
$$
Here, the symbols $k_s, k_{s+1}\dots, k_t$ in each sum run over the product $\{1,2,\dots, q\}^{t-s+ 1} $.
\end{lem}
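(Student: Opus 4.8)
The plan is to prove the formula by induction on word length (or equivalently on the number of generators appearing in $y$), using the multiplicativity of $\Upsilon_m$ together with the Leibniz rule for Fox derivatives. First I would observe that the asserted matrix is upper triangular with $1$'s on the diagonal, and its $(s,t)$-entry ($s<t$) is $\sum_{k_s,\dots,k_{t-1}}\varepsilon(D_{k_s\cdots k_{t-1}}(y))\,\lambda_s^{(k_s)}\cdots\lambda_{t-1}^{(k_{t-1})}$; call this entry $M_{s,t}(y)$, and set $M_{s,s}(y)=1$, $M_{s,t}(y)=0$ for $s>t$. The goal is then exactly the identity $\Upsilon_m(y)_{s,t}=M_{s,t}(y)$ for all $s,t$ and all $y\in F$.

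The base case is $y=x_j^{\pm1}$. For $y=x_j$ one has $D_{k_1}(x_j)=\delta_{k_1,j}$ and $D_{k_1\cdots k_r}(x_j)=0$ for $r\ge 2$, so $M_{s,t}(x_j)=\lambda_s^{(j)}$ if $t=s+1$ and $0$ if $t>s+1$; this matches $\Upsilon_m(x_j)$ by definition. For $y=x_j^{-1}$ I would verify the matrix identity directly, or deduce it from the multiplicative step once that is established, using $\Upsilon_m(x_j^{-1})=\Upsilon_m(x_j)^{-1}$ and the Fox-derivative value $D_{k_1}(x_j^{-1})=-x_j^{-1}\delta_{k_1,j}$, whose augmentation is $-\delta_{k_1,j}$, together with $D_{k_1\cdots k_r}(x_j^{-1})$ having augmentation $0$ for $r\ge2$ (a short computation from the Leibniz rule applied to $x_jx_j^{-1}=1$).

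For the inductive step, write $y=uv$ with $u,v$ shorter. On the matrix side, $\Upsilon_m(uv)_{s,t}=\sum_{r=s}^{t}\Upsilon_m(u)_{s,r}\,\Upsilon_m(v)_{r,t}=\sum_{r}M_{s,r}(u)M_{r,t}(v)$ by the inductive hypothesis. On the derivative side, the key combinatorial fact is that iterating the Leibniz rule $D_{k}(uv)=D_k(u)\varepsilon(v)+uD_k(v)$ gives, after applying $\varepsilon$,
\[
\varepsilon\bigl(D_{k_s\cdots k_{t-1}}(uv)\bigr)=\sum_{r=s}^{t}\varepsilon\bigl(D_{k_s\cdots k_{r-1}}(u)\bigr)\,\varepsilon\bigl(D_{k_r\cdots k_{t-1}}(v)\bigr),
\]
i.e. the "higher Leibniz rule" splitting the index string $(k_s,\dots,k_{t-1})$ at every position; the boundary terms $r=s$ and $r=t$ correspond to the empty string on one side, with the convention $\varepsilon(D_\emptyset(w))=1$. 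Granting this, multiplying by $\lambda_s^{(k_s)}\cdots\lambda_{t-1}^{(k_{t-1})}$ and summing over all $k$'s factors the sum as $\sum_r M_{s,r}(u)M_{r,t}(v)$, which is exactly the matrix product computed above; hence $\Upsilon_m(y)_{s,t}=M_{s,t}(y)$, completing the induction. I expect the main obstacle to be a careful proof of this higher Leibniz identity — establishing it cleanly requires an auxiliary induction on the number of indices $n=t-s$, and one must track the augmentation $\varepsilon(u)=1$ (valid since $u\in F$) so that the $uD_{\cdots}(v)$ terms contribute with coefficient $1$ rather than $u$. Everything else is bookkeeping: matching index ranges, handling the empty-string conventions at the ends, and confirming that the ring $\Omega_m$ is commutative so that the monomials $\lambda_s^{(k_s)}\cdots\lambda_{t-1}^{(k_{t-1})}$ behave as claimed.
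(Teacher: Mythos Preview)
Your approach is exactly one of the two the paper indicates (it states only that ``this lemma can be shown by induction on the word length of $y$ or the shuffle relation \eqref{shu}'' and gives no further details), and the core of your argument---the higher Leibniz identity $\varepsilon(D_{k_s\cdots k_{t-1}}(uv))=\sum_r \varepsilon(D_{k_s\cdots k_{r-1}}(u))\varepsilon(D_{k_r\cdots k_{t-1}}(v))$ yielding $M(uv)=M(u)M(v)$---is correct and is precisely what makes the induction go through.

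One slip to fix: your claim that $\varepsilon(D_{k_1\cdots k_r}(x_j^{-1}))=0$ for $r\ge 2$ is false. A direct computation from $D_k(x_j^{-1})=-x_j^{-1}\delta_{k,j}$ gives $\varepsilon(D_{k_1\cdots k_r}(x_j^{-1}))=(-1)^r\delta_{k_1,j}\cdots\delta_{k_r,j}$, which is exactly what is needed so that $M(x_j^{-1})$ equals $(I+N)^{-1}=I-N+N^2-\cdots=\Upsilon_m(x_j)^{-1}$. Alternatively, and more cleanly, once your higher Leibniz identity is established you have that $y\mapsto M(y)$ is a group homomorphism $F\to GL_m(\Omega_m)$; since it agrees with $\Upsilon_m$ on the free generators $x_j$, it agrees everywhere, and no separate base case for $x_j^{-1}$ is needed.
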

\noindent
This lemma can be shown by induction on the word length of $y$ or the shuffle relation \eqref{shu}.

Furthermore, we need to consider the equality \eqref{kankei} described below.
For $b \in \mathrm{Im} (\Upsilon_{m})$, we choose a preimage $ B \in p_m^{-1} (b) \subset \mathrm{Im} (\Upsilon_{m+1})$.
Here, it is worth noting from Lemma \ref{lemapp1} that the choice is a problem of choosing in the $(1,m+1)$-entry of $B$.
By centrality, we can easily see that
\begin{equation}\label{kankei} B^{-1} A B = ( B+ \omega E_{1,m+1})^{-1} A ( B+ \omega E_{1,m+1}) \in \mathrm{Im}(\Upsilon_{m+1})
\end{equation}
for any $A \in \mathrm{Im} (\Upsilon_{m+1})$ and any $\omega \in \Omega_{m+1}.$

Finally, we should comment on the work of Murasugi \cite{Mur}.
He also considered similar unipotent matrices over $\Z$ and showed a relation to Milnor invariants modulo some integers (see also \cite[\S 12.9]{Hil}).
However, his arguments encountered the difficulties (II)--(IV) in \S \ref{Srev}.

\section{Theorem on the first non-vanishing Milnor invariant.}\label{Sthm}
We will state Theorem \ref{thm1} and Corollary \ref{thm2}, which can resolve the difficulties (II) and (III).
We were inspired to develop them by the quandle cocycle invariant \cite[\S 5]{CEGS} (see \S \ref{App2} for details).

First, let us set up some notation from knot theory. 
Choose a link diagram $D$ of $L$, and suppose Assumption $\mathcal{A}_m$.
Let $l_j$ be a simple closed curve on the boundary $\partial V_j$ of a tubular neighbourhood of $j$-th component of $L$ such that $p_j\circ l_j \circ p_j^{-1}$ represents $\mathfrak{l}_j$ $(j \in \{1,\ldots, q\})$. Here, $p_j$ is a path from the base point of $\pi_1(S^3\setminus L)$ to $\partial V_j$. As illustrated in Figure \ref{logifig}, consider the over-arcs $\alpha_1, \ \alpha_2, \dots, \alpha_{N_j }$ along the orientation of the longitude $l_j$.
We may assume that $ \alpha_1 $ is equal to the meridian $\mathfrak{m}_j $.
Let $\beta_k $ be the arc that divides $ \alpha_{k}$ and $ \alpha_{k+1}$, and $\epsilon_k \in \{ \pm 1\}$ be the sign of the crossing between $ \alpha_{k}$ and $ \beta_{k}$.
Then, via Wirtinger presentation, we can regard $f_m$ as a map $ \{ \ \mathrm{arcs \ of \ } D \ \} \ra F/\Gamma_m F $; further, the preferred longitude on the link component 
is presented by
$$ \alpha_1^{- \epsilon_1}  \beta_1^{- \epsilon_1}  \alpha_2^{- \epsilon_2}  
\beta_2^{- \epsilon_2} \cdots \beta_{N_j}^{- \epsilon_{N_j}}  \alpha_{N_j}^{- \epsilon_{N_j}}   \pi_1(S^3 \setminus L). $$


\begin{figure}[tpb]
\begin{center}
\begin{picture}(50,74)
\put(-68,25){\large \ \ \ \ $\alpha_1 $}
\put(-13,24){\large $\alpha_2 $}
\put(14,24){\large $\alpha_3 $}
\put(-73,54){\Large $l_j $}

\put(-66,37){\pc{longitude3}{0.34}}

\put(-39,50){\large $\beta_1 $}
\put(-6,50){\large $\beta_2 $}
\put(69,50){\large $\beta_{N_j} $}
\put(33,46){\large $\cdots $}
\end{picture}
\end{center}
\vskip -1.7pc
\caption{\label{logifig} The longitude $l_j$ and arcs $\alpha_i$ and $\beta_i$ in the diagram $D$. }
\end{figure}
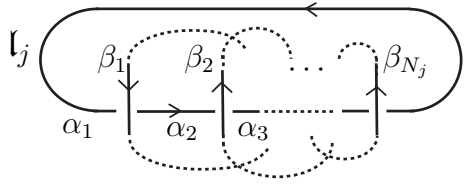

Next, we inductively define
an assignment $\mathcal{C}: \{ \alpha_k \}_{k \leq N_{j}} \ra \mathrm{Im}(\Upsilon_{m+1}) $.
Let $\mathcal{C} (\alpha_1 ) =\Upsilon_{m+1} (x_j) $.
For $1 < k \leq N_{j } $ and by choosing $B_{k} \in p_m^{-1} \bigl( f_m( \beta_k) \bigr) $, we can define $\mathcal{C} (\alpha_{k+1} )$ by
$B_k^{- \epsilon_k }\cdot \mathcal{C} (\alpha_{k} ) \cdot B_k^{\epsilon_k }$.
The independence of the choice of $B_k$ follows from \eqref{kankei}.
In summary, we should notice that this assignment canonically extends to a map $\mathcal{C}: \{ \ \mathrm{arcs \ of \ } D \ \} \ra \mathrm{Im}(\Upsilon_{m+1}) $.

Before proving Theorem \ref{thm1}, let us define the difference between the first and the $N_j$-th crossings as the following: 
\begin{equation}\label{fm+11}
\Psi_{m}(j) :=\Upsilon_{m+1} (x_j) ^{-1} \cdot B_{N_j}^{- \epsilon_{N_j} } \cdot \mathcal{C} (\alpha_{N_j} ) \cdot B_{N_j}^{ \epsilon_{N_j} } \in \Upsilon_{m+1}( Q_{m+1}).
\end{equation}
The point here is that the definition of $ \Psi_{m}(j) $ does not use any explicit description of the longitude $\mathfrak{l}_j $.
Furthermore, for any $k \in \mathbb{N}$ and $ j \leq \# L$, consider a map
\begin{equation}\label{iij} \Gamma_{k} F \lra \Gamma_{k+1} F; \ \ \ \ \ y \longmapsto x_j^{-1} y^{-1} x_j y .
\end{equation}
Then, this map with $k=m$ induces an additive homomorphism
$$ \mathcal{I}_j : \Upsilon_m ( Q_m) \lra \Upsilon_{m+1} ( Q_{m+1}).$$
Then, via $\mathcal{I}_j$, the formula $\Psi_{m}(j) $ is equivalent to the $m$-th Milnor invariant $f_m (\mathfrak{l}_j) \in Q_m $: 
\begin{thm}\label{thm1} This map $ \mathcal{I}_j$ is injective, and
the equality $ \mathcal{I}_j \bigl( \Upsilon_m \circ f_m (\mathfrak{l}_j) \bigr) = \Psi_{m} (j)$ holds.
\end{thm}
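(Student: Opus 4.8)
The plan is to analyze the word $w_j \in F$ representing the longitude $\mathfrak{l}_j$ read off the diagram $D$, and to track how the unipotent embedding transports it along the chain of over-arcs $\alpha_1, \dots, \alpha_{N_j}$. First I would make explicit the Wirtinger reading: if the crossings along $\mathfrak{l}_j$ contribute generators $g_k := f_m(\beta_k) \in F/\Gamma_m F$ with signs $\epsilon_k$, then $f_m(\alpha_{k+1})$ is the conjugate $g_k^{-\epsilon_k} f_m(\alpha_k) g_k^{\epsilon_k}$, and after the full loop one has $f_m(\mathfrak{l}_j) \cdot f_m(\mathfrak{m}_j) = f_m(\alpha_{N_j+1}) = g_{N_j}^{-\epsilon_{N_j}} \cdots g_1^{-\epsilon_1} \cdot f_m(\mathfrak{m}_j) \cdot g_1^{\epsilon_1} \cdots g_{N_j}^{\epsilon_{N_j}}$, so that the longitude is literally the product of conjugating factors; this is the content of Milnor's formula \eqref{qq}. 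The assignment $\mathcal{C}$ lifts this same recursion to $\mathrm{Im}(\Upsilon_{m+1})$: $\mathcal{C}(\alpha_{k+1}) = B_k^{-\epsilon_k} \mathcal{C}(\alpha_k) B_k^{\epsilon_k}$ with $B_k \in p_m^{-1}(f_m(\beta_k))$, and by \eqref{kankei} the ambiguity of the lift $B_k$ (a scalar in the $(1,m{+}1)$ entry, lying in the center) does not affect the conjugate. So $\mathcal{C}$ is a genuinely well-defined lift of the diagrammatic transport of $f_m$.

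Next I would set $b := \Upsilon_{m+1}(x_j)$ and unwind the definition \eqref{fm+11}: $\Psi_m(j) = b^{-1} \cdot B_{N_j}^{-\epsilon_{N_j}} \mathcal{C}(\alpha_{N_j}) B_{N_j}^{\epsilon_{N_j}} = b^{-1} \cdot W^{-1} b W$, where $W := B_1^{\epsilon_1} \cdots B_{N_j}^{\epsilon_{N_j}} \in \mathrm{Im}(\Upsilon_{m+1})$ is a chosen lift of $f_m(\mathfrak{l}_j \mathfrak{m}_j^{-1} \cdot \text{something})$ — more precisely $p_m(W) = f_m(w)$ for the Wirtinger word $w$ with $w \cdot \mathfrak{m}_j = \mathfrak{l}_j \cdot \mathfrak{m}_j$ in $F/\Gamma_m F$, i.e. $p_m(W)$ represents $f_m(\mathfrak{l}_j)$ modulo the center $Q_m$. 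Since $f_m(\mathfrak{l}_j) \in Q_m$ is central in $F/\Gamma_m F$, choosing instead the lift $\widetilde{W}$ of $f_m(\mathfrak{l}_j) f_m(\mathfrak{m}_j)$ coming from the actual longitude changes $W$ only by a central factor, which again drops out of $b^{-1} W^{-1} b W$ by \eqref{kankei}. Hence $\Psi_m(j) = b^{-1} \widetilde{W}^{-1} b \widetilde{W}$ where $p_m(\widetilde{W}) = f_m(\mathfrak{l}_j \mathfrak{m}_j)$. Writing $\widetilde{W} = C \cdot b$ with $p_m(C) = f_m(\mathfrak{l}_j) \in Q_m$ central, one computes $b^{-1}\widetilde{W}^{-1} b \widetilde{W} = b^{-1} b^{-1} C^{-1} b C b = b^{-1}\big(b^{-1} C^{-1} b\, C\big) b$; since $C$ projects into the center mod $\Gamma_m F$, the element $b^{-1}C^{-1}bC$ lies in $\Upsilon_{m+1}(\Gamma_m F/\Gamma_{m+1} F)$, and conjugating it by $b^{-1}$ (another central adjustment at level $m{+}1$, again harmless by \eqref{kankei}) shows $\Psi_m(j)$ equals the image of $C$ under exactly the commutator map \eqref{iij} with $k=m$, i.e. $\Psi_m(j) = \mathcal{I}_j\big(\Upsilon_m \circ f_m(\mathfrak{l}_j)\big)$. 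This establishes the asserted equality.

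Finally, for injectivity of $\mathcal{I}_j$: the map \eqref{iij} at level $k=m$ sends $y \mapsto [x_j^{-1}, y^{-1}]$ and, restricted to $\Gamma_m F$ where $y$ is central, it factors through an additive map $Q_m \to Q_{m+1}$ that is precisely the commutator bracket $[x_j, -]$ in the associated graded Lie ring of $F$. Using Lemma \ref{lemapp1}, I would compute this explicitly at the level of matrices: if $\Upsilon_m(y) = I_m + \omega E_{1,m}$ with $\omega = \sum \varepsilon(D_{k_1\cdots k_{m-1}}(y)) \lambda_1^{(k_1)}\cdots\lambda_{m-1}^{(k_{m-1})}$, then a direct $(m{+}1)\times(m{+}1)$ computation of $\Upsilon_{m+1}(x_j)^{-1}\Upsilon_{m+1}(y)^{-1}\Upsilon_{m+1}(x_j)\Upsilon_{m+1}(y)$ yields $I_{m+1} + \omega' E_{1,m+1}$ with $\omega'$ equal to $\omega$ with $\lambda_{m-1}^{(\cdot)}$-type variables shifted up by one slot and multiplied by $\lambda_m^{(j)}$ (coming from the extra row/column carrying $x_j$), up to sign. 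This substitution is an injective $\Z$-linear map on the relevant monomial submodules of $\Omega_{m+1}$, because it sends distinct basis monomials to distinct basis monomials; hence $\mathcal{I}_j$ is injective. The main obstacle I anticipate is bookkeeping: getting the indices and signs in the matrix computation of \eqref{iij} exactly right, and matching the diagrammatic product $W$ against Milnor's longitude word carefully enough to be sure the discarded factors are genuinely central at the correct filtration level so that \eqref{kankei} applies each time.
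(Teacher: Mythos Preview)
Your approach is correct and follows essentially the same route as the paper: unwind $\Psi_m(j)$ as the commutator $b^{-1}\mathcal{B}^{-1}b\,\mathcal{B}$ with $\mathcal{B}=B_1^{\epsilon_1}\cdots B_{N_j}^{\epsilon_{N_j}}$, then use that this commutator depends only on $p_m(\mathcal{B})=f_m(\mathfrak{l}_j)$, which is exactly $\mathcal{I}_j\bigl(\Upsilon_m\circ f_m(\mathfrak{l}_j)\bigr)$. The paper does this in one line via the observation $[x_j,z]=[x_j,p_m(z)]\in Q_{m+1}$ for $z\in\Gamma_{m-1}F$, so your detour through $\widetilde{W}$ and $C$ is unnecessary (in fact $p_m(\mathcal{B})=f_m(\mathfrak{l}_j)$ on the nose, not merely modulo the center). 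For injectivity the paper invokes the bracket formula \eqref{ghgh}, which gives $\omega'=\lambda_1^{(j)}\kappa_1(\omega)-\iota_1(\omega)\lambda_m^{(j)}$; note this has \emph{two} terms, not the single shift-and-multiply you describe, though your conclusion that distinct monomials remain distinct (hence injectivity) survives.
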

\begin{proof}
Injectivity can be easily verified from the equality \eqref{ghgh} in \S \ref{Sbra1}. 
To prove the latter assertion, let us denote $B_{1}^{ \epsilon_{1} }B_{2}^{ \epsilon_{2} } \cdots B_{N_j}^{ \epsilon_{N_j} }$ by $\mathcal{B}$
and use the abbreviation $[g,h]= ghg^{-1}h^{-1}$.
Then, $\Psi_{m}(j)= \Upsilon_m ([x_j^{-1}, \mathcal{B}^{-1}]) $ by definition.
In addition, notice the elementary equality
$[x_j,z] = [x_j, p_{m}(z)] \in Q_{m+1}$ for any $z \in \Gamma_{m-1} F$, with abuse of notation.
Since $p_{m }(\mathcal{B}) = f_m(\mathfrak{l}_j)$ from Figure \ref{logifig}, we have
$\mathcal{I}_j \bigl( \Upsilon_m \circ f_m (\mathfrak{l}_j)\bigr) = \Upsilon_m ([ x_j^{-1}, f_m(\mathfrak{l}_j^{-1})])= \Upsilon_m ([x_j^{-1}, \mathcal{B}^{-1}]) = \Psi_{m} (j) $, from which the theorem immediately follows.
\end{proof} 

As a result, we can give an explicit description of the lift $ f_{m+1} $ under an assumption. 
\begin{cor}[cf. Proposition \ref{ea211}]\label{thm2}
Suppose Assumption $\mathcal{A}_{m}$ and that all the Milnor invariants $ f_m (\mathfrak{l}_\ell) $ are zero for any $\ell \leq \# L$.
Then, the above assignment $\mathcal{C}: \{ \ \mathrm{arcs \ of \ } D \ \} \ra \Upsilon_{m+1} ( F/ \Gamma_{m+1}F) $ defines a homomorphism $f_{m+1}: \pi_1 (S^3 \setminus L) \ra \Upsilon_{m+1} ( F/ \Gamma_{m+1}F) $.
\end{cor}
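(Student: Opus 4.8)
The plan is to verify that the arc-labelling $\mathcal{C}$ satisfies the Wirtinger relations, so that it descends to a homomorphism on $\pi_1(S^3\setminus L)$. Recall that a Wirtinger presentation has one generator per arc and one relation per crossing, of the form $\gamma = \beta^{-\epsilon}\alpha\beta^{\epsilon}$ where $\alpha,\gamma$ are the under-arcs and $\beta$ the over-arc at the crossing, with $\epsilon$ the sign. By construction, $\mathcal{C}$ already satisfies, by fiat, all the Wirtinger relations at the crossings that are \emph{not} the final crossing $\beta_{N_j}$ of some longitude $\mathfrak{l}_j$ — this is precisely the inductive rule $\mathcal{C}(\alpha_{k+1}) = B_k^{-\epsilon_k}\mathcal{C}(\alpha_k)B_k^{\epsilon_k}$, since $B_k$ is a lift of $f_m(\beta_k)$ and $p_{m+1}\circ\mathcal{C}(\beta_k)=f_m(\beta_k)$ together with \eqref{kankei} guarantees the conjugate lands in $\mathrm{Im}(\Upsilon_{m+1})$ independently of the chosen lift. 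So the only relations left to check are the $\# L$ ``closing'' relations, one for each longitude.

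First I would make precise what the closing relation at the last crossing of $\mathfrak{l}_j$ demands. Going once around the longitude $\mathfrak{l}_j$, the arc $\alpha_1=\mathfrak{m}_j$ returns to itself after $N_j$ conjugations, and the Wirtinger relation at the final crossing $\beta_{N_j}$ is exactly the statement that $B_{N_j}^{-\epsilon_{N_j}}\mathcal{C}(\alpha_{N_j})B_{N_j}^{\epsilon_{N_j}} = \mathcal{C}(\alpha_1) = \Upsilon_{m+1}(x_j)$. But by the very definition \eqref{fm+11}, the left-hand side equals $\Upsilon_{m+1}(x_j)\cdot\Psi_m(j)$. Hence the closing relation holds if and only if $\Psi_m(j) = I_{m+1}$, i.e.\ $\Psi_m(j)$ is the trivial element of $\Upsilon_{m+1}(Q_{m+1})$.

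Next I would invoke Theorem \ref{thm1}: the map $\mathcal{I}_j$ is injective and $\mathcal{I}_j\bigl(\Upsilon_m\circ f_m(\mathfrak{l}_j)\bigr) = \Psi_m(j)$. By the hypothesis of the corollary, $f_m(\mathfrak{l}_\ell)=0\in Q_m$ for every $\ell$, so in particular $\Upsilon_m\circ f_m(\mathfrak{l}_j)=I_m$, and therefore $\Psi_m(j) = \mathcal{I}_j(I_m) = I_{m+1}$ because $\mathcal{I}_j$ is induced by the group homomorphism \eqref{iij} (with $k=m$) and hence sends the identity to the identity. This is true for each $j\leq\# L$, so all closing relations are satisfied. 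Combined with the first paragraph, $\mathcal{C}$ respects every Wirtinger relation, so it factors through a well-defined homomorphism $f_{m+1}\colon\pi_1(S^3\setminus L)\to\Upsilon_{m+1}(F/\Gamma_{m+1}F)$. Finally I would record that $p_{m+1}\circ f_{m+1}=f_m$ by the compatibility of $p_m$ with the conjugation rule (each $B_k$ lifts $f_m(\beta_k)$), so that $f_{m+1}$ is genuinely a lift in the sense of Assumption $\mathcal{A}_{m+1}$.

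I expect the only subtle point — and the one I would be most careful about — to be the bookkeeping around base points and orientations: the arc $\alpha_1$ must be literally identified with the meridian $\mathfrak{m}_j$, the passage from $\mathcal{C}(\alpha_{N_j})$ back to $\mathcal{C}(\alpha_1)$ must be exactly the Wirtinger relation at $\beta_{N_j}$ (with the correct sign $\epsilon_{N_j}$), and one must make sure that the ``non-final'' crossings along $\mathfrak{l}_j$ together with the crossings belonging to other longitudes exhaust all crossings of $D$ without overlap, so that no Wirtinger relation is missed or double-counted. Everything else is a formal consequence of \eqref{kankei}, Lemma \ref{lemapp1}, and Theorem \ref{thm1}.
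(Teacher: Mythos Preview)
Your argument is correct and follows essentially the same route as the paper's own proof: verify that the construction of $\mathcal{C}$ already enforces the Wirtinger relations at all but the final crossing along each longitude, and then use Theorem \ref{thm1} together with the hypothesis $f_m(\mathfrak{l}_j)=0$ to conclude $\Psi_m(j)=0$, which is precisely the remaining closing relation. The only minor imprecision is calling \eqref{iij} itself a ``group homomorphism'' --- it is not, but the induced map $\mathcal{I}_j$ on the abelian quotients is additive (as the paper states), and in any case the original map visibly sends the identity to the identity, which is all you need.
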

\begin{proof}
From the construction of $\mathcal{C}$,
for any $j$ and any $1 \leq k < N_j$,
the Wirtinger relation of the $k$-th crossing is satisfied.
Notice $\Psi_{m} (j)=0$ from Theorem \ref{thm1};
Hence, $\mathcal{C}$ satisfies the Wirtinger presentation for every crossing, leading to the desired homomorphism.
\end{proof}

In conclusion, let us describe the procedure for computing Milnor invariants: 
Starting from the abelianization $f_2 : \pi_1 (S^3 \setminus L) \ra \Z^{q}$,
we may suppose the existence of homomorphisms $f_k : \pi_1 (S^3 \setminus L) \ra F/\Gamma_k F$ with any $k \leq h $ (i.e., Assumption $\mathcal{A}_h$),
and the $h$-th Milnor invariant $f_{h}(\mathfrak{l}_{\ell}) $ is zero.
Then, Corollary \ref{thm2} implies the existence of $f_{h+1}$ together with explicit presentations.
Iterating this process, let us suppose the existence of a minimum $m$ with $f_{m}(\mathfrak{l}_{\ell}) \neq 0$ for some $\ell $, 
that is, the assumption in Theorem \ref{thm1}, which gives the diagrammatic computation of the $m$-th Milnor invariant.
Here, since the ring $\Omega_m$ is commutative, this procedure is compatible with computer programs found in software such as Mathematica.
To sum up, these results overcome the difficulties (I)--(III). 

\section{Refinement of the higher $\bar{\mu}$-invariant.}\label{SHigher}
In the original papers \cite{Mil1,Mil2}, Milnor defined $\bar{\mu}$-invariants even in the case $f_m(\mathfrak{l}_{\ell})\neq 0,$
as mentioned in the fifth difficulty (V). 
While Habegger and Lin \cite{HL} considered a lift of Milnor invariants to string links, 
we will give a new approach which is also applicable to tame links. 
This section gives a refinement of the $\bar{\mu}$-invariants and describes a computation of the higher invariants, similar to Theorem \ref{thm1}.

First, let us explain the idea.
Milnor \cite{Mil2} (see also \cite[\S 12]{Hil}) proved that the $h$-th quotient $ \pi_1 (S^3 \setminus L) /\Gamma_h \pi_1 (S^3 \setminus L)$ has the presentation,
\begin{equation}\label{qq}
\bigl\langle \ \ x_1, \dots, x_q\ \ \bigl| \ \ [x_j, w_j]=1 \mathrm{ \ for \ } j \leq q , \ \ \Gamma_h F \ \ \bigr\rangle,
\end{equation}
where $x_j$ and $ w_j$ are represented by the $j$-th meridian and the $j$-th longitude, respectively (where $w_j$ depends on $h$).
Thus, $F/\Gamma_h F$ surjects onto this quotient group.
Since $ F/\Gamma_h F \cong \mathrm{ Im}\Upsilon_h$, there is
an isomorphism $\bar{f}_h$ from the group \eqref{qq}
to $ \mathrm{ Im}\Upsilon_h/ \mathcal{N}_{h ,L}$ for some normal subgroup $\mathcal{N}_{h ,L}$.
Furthermore, from Proposition \ref{ea211},
$ w_j$ lies in $\Gamma_{m} F $ if and only if $[x_j, w_j] $ is contained in $\Gamma_{m+1} F $.
Thus, it is reasonable to consider this $[x_j, w_j] $ (instead of $w_j$) to be an obstruction of such isomorphisms $\bar{f}_h$.
To conclude, we will define the higher invariants as $[x_j, w_j] $, i.e., as something quantitative in $ \mathrm{ Im}\Upsilon_h/ \mathcal{N}_{h ,L}$.

On the basis of this idea, we define the normal subgroups $ \mathcal{N}_{h ,L}$ and homomorphisms
$$\overline{f}_{h}: \pi_1 (S^3 \setminus L) \lra \mathrm{Im}(\Upsilon_{h})/ \mathcal{N}_{h,L} $$
which satisfy $ \overline{f}_{h-1}=
[p_{h-1}] \circ \overline{f}_{h} $ and $\overline{f}_{h}( \mathfrak{m}_j ) =[\Upsilon_h(x_j)]$ by induction on $h $.
Here, we also introduce a finite set $ K_h \subset \mathrm{Im}( \Upsilon_h) $ by induction.
Suppose the assumption $\mathcal{A}_m$.
To begin, if $h=m$, we let $ K_m $ be the empty set, $\mathcal{N}_{m,L}$ be zero, and $ \overline{f}_m=f_m$.
Next, suppose that we can define such $ \overline{f}_{h}$ and $\mathcal{N}_{h,L}$. 
Recall the projection $p_h : \mathrm{Im}( \Upsilon_{h+1}) \ra \mathrm{Im}( \Upsilon_h)$, and
choose a section $\mathfrak{s}_h : \mathrm{Im}( \Upsilon_h) \ra \mathrm{Im}( \Upsilon_{h+1}) $. 
In an analogous way to \eqref{fm+11}, taking the arcs $\alpha_k $ and $ \beta_k $ from Figure \ref{logifig}, 
we will inductively define $\mathcal{C}_{h} (\alpha_k ) $ which lies in the quotient $ \mathrm{Im}( \Upsilon_{h+1} ) / \langle \mathfrak{s}_h (K_h) \rangle.$ %
Here, $ \langle \mathfrak{s}_h (K_h) \rangle$ is the normal closure of $\mathfrak{s}_h (K_h) $.
Let $\mathcal{C}_h (\alpha_1 ) =\Upsilon_{h+1} (x_j) $.
For $1 < k \leq N_{j }, $ choosing a representative $B_{k} \in \mathrm{Im}( \Upsilon_{h+1} ) $
such that $[p_h (B_k)]= \bar{f}_h( \beta_k) $, we can define $\mathcal{C}_{h} (\alpha_{k+1} )$ by
$B_k^{- \epsilon_k }\cdot \mathcal{C}_{h} (\alpha_{k} ) \cdot B_k^{\epsilon_k }$.
Moreover, let us analogously define the difference between the first and the the $N_j$-th crossing as
\begin{equation}\label{fm+1122}
\bar{\mu}_{L}^{h} (j) :=\Upsilon_{h+1} (x_j) ^{-1} \cdot B_{N_j}^{- \epsilon_{N_j} } \cdot \mathcal{C}_{h} (\alpha_{N_j} ) \cdot B_{N_j}^{ \epsilon_{N_j} } \in \mathrm{Im}( \Upsilon_{h+1} ) / \langle \mathfrak{s}_h (K_h) \rangle.
\end{equation}
It is worth noting that $ \bar{\mu}_{L}^{h} (j)$ lies in $\Upsilon_{h+1} (Q_{h+1}) / \bigl( \langle \mathfrak{s}_h (K_h) \rangle \cap \Upsilon_{h+1} (Q_{h+1}) \bigr)$, since
$ \bar{p}_{h+1} (\bar{\mu}_{L}^{h} (j))) =\bar{\mu}_{L}^{h-1} (j)= 0$ by induction; we later show that this central quotient is independent of the choice of $ \mathfrak{s}_h $ (see Proposition \ref{oo}).
Next, we define $K_{h+1} \subset \mathrm{Im}( \Upsilon_{h+1}) $ to be the union
$$ \{ \mathfrak{s}_h (K_h) \} \cup \{ \bar{\mu}_{L}^{h} (1),\dots,\bar{\mu}_{L}^{h} (\# L) \} ,$$
and define $\mathcal{N}_{h+1,L}$ as the subgroup normally generated by $ K_{h+1}$.
To summarize, similar to Corollary \ref{thm2}, the assignment $\mathcal{C}_{h} $ defines a homomorphism
$ \overline{ f}_{h+1} : \pi_1 (S^3 \setminus L) \ra \mathrm{Im}(\Upsilon_{h+1} ) / \mathcal{N}_{h+1,L}$,
and we have the commutative diagram:
$$
{\normalsize
\xymatrix{
\pi_1 (S^3 \setminus L) \ar[d]_{f_m} \ar[dr]_{\overline{f}_{m+1}}^{ \ \cdots \cdots } \ar[drrr]_{\overline{f}_{h}} \ar[drrrrr]^{\overline{f_{h+1}}} 
& & \\
\mathrm{Im}(\Upsilon_{m} )&\mathrm{Im}(\Upsilon_{m+1} ) /\mathcal{N}_{m+1,L} \ar[l]^{\!\!\!\!\!\!\!\! p_{m} } &\cdots \cdots \ar[l] & \mathrm{Im}(\Upsilon_{h} ) / \mathcal{N}_{h,L} \ar[l]^{\!\!\!\!\!\!\!\! p_{h-1} }
& & \mathrm{Im}(\Upsilon_{h+1} )/ \mathcal{N}_{h+1 ,L} \ar[ll]^{p_{h} }.& &
}}$$

We can easily prove a theorem similar to Theorem \ref{thm1} considered modulo $\mathcal{N}_{h ,L}$.
\begin{thm}\label{thm4}For any $ j \leq \# L$,
the equality $ \bar{\mu}_{L}^{h} (j) = [ \Upsilon_{m+1}(x_j), \ \mathfrak{s}_h \bigl( \bar{f}_h (\mathfrak{l}_j ) \bigr) ] $ holds.
\end{thm}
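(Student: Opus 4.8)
The plan is to mimic the proof of Theorem \ref{thm1} essentially verbatim, carrying out the same commutator bookkeeping but now in the quotient group $\mathrm{Im}(\Upsilon_{h+1})/\langle \mathfrak{s}_h(K_h)\rangle$ rather than in $\mathrm{Im}(\Upsilon_{m+1})$. First I would set $\mathcal{B}:=B_1^{\epsilon_1}B_2^{\epsilon_2}\cdots B_{N_j}^{\epsilon_{N_j}}$, where the $B_k$ are the chosen representatives with $[p_h(B_k)]=\bar f_h(\beta_k)$. Unwinding the inductive definition of $\mathcal{C}_h(\alpha_k)$ exactly as in Theorem \ref{thm1} — each step conjugates by $B_k^{\pm\epsilon_k}$ — one obtains $\mathcal{C}_h(\alpha_{N_j}) = \mathcal{B}^{-1}\,\Upsilon_{h+1}(x_j)\,\mathcal{B}$ modulo $\langle\mathfrak{s}_h(K_h)\rangle$, and hence, after the last conjugation appearing in \eqref{fm+1122},
$$
\bar\mu_L^h(j) = \Upsilon_{h+1}(x_j)^{-1}\cdot (B_{N_j}^{\epsilon_{N_j}}\mathcal{B})^{-1}\,\Upsilon_{h+1}(x_j)\,(B_{N_j}^{\epsilon_{N_j}}\mathcal{B})
= [\Upsilon_{h+1}(x_j)^{-1},\ \mathcal{B}^{-1}]
$$
in the quotient, using the abbreviation $[g,h]=ghg^{-1}h^{-1}$ as in the earlier proof. (Strictly, one should note $B_{N_j}^{\epsilon_{N_j}}\mathcal{B}$ and $\mathcal{B}$ differ by a left factor $B_{N_j}^{\epsilon_{N_j}}$ whose adjoint action agrees with that of $p_h(B_{N_j})$; this is where the centrality lemma \eqref{kankei}, as already used to define $\mathcal{C}_h$, does the work.)

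Next I would identify $p_h(\mathcal{B})$ with the longitude. By the Wirtinger reading of Figure \ref{logifig}, $p_h(\mathcal{B}) = \bar f_h(\mathfrak{l}_j)$ in $\mathrm{Im}(\Upsilon_h)/\mathcal{N}_{h,L}$, so $\mathcal{B}$ is a lift of $\bar f_h(\mathfrak{l}_j)$ along the section $\mathfrak{s}_h$ up to a central correction lying in $\Upsilon_{h+1}(Q_{h+1})$. Then I invoke the same elementary commutator identity used in Theorem \ref{thm1}: for $z$ with $p_h(z)=p_h(z')$, the commutators $[\Upsilon_{h+1}(x_j),z]$ and $[\Upsilon_{h+1}(x_j),z']$ agree, because their ratio is a commutator of $\Upsilon_{h+1}(x_j)$ with a central element, hence trivial. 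Applying this with $z=\mathcal{B}^{-1}$ and $z'=\mathfrak{s}_h(\bar f_h(\mathfrak{l}_j))^{-1}$ (after noting $\bar f_h(\mathfrak{l}_j^{-1})=\bar f_h(\mathfrak{l}_j)^{-1}$ since $\bar f_h$ lands in the center on longitudes, up to the meridian factor which also centralizes), gives
$$
\bar\mu_L^h(j) = [\,\Upsilon_{h+1}(x_j)^{-1},\ \mathfrak{s}_h(\bar f_h(\mathfrak{l}_j))^{-1}\,] = [\,\Upsilon_{m+1}(x_j),\ \mathfrak{s}_h(\bar f_h(\mathfrak{l}_j))\,],
$$
the last equality being the standard $[g^{-1},h^{-1}]=[g,h]$ valid in the abelian kernel $Q_{h+1}$ together with the identification $\Upsilon_{h+1}=\Upsilon_{m+1}$ on the relevant meridian (both are the unipotent image of $x_j$).

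The main obstacle I anticipate is bookkeeping the role of the quotient $\langle\mathfrak{s}_h(K_h)\rangle$ throughout: one must check that every manipulation (the telescoping of the $\mathcal{C}_h(\alpha_k)$, the centrality substitution, the replacement of $\mathcal{B}$ by $\mathfrak{s}_h(\bar f_h(\mathfrak{l}_j))$) is legitimate after passing to this quotient, i.e., that the discrepancies introduced at each step already lie in $\langle\mathfrak{s}_h(K_h)\rangle$ or, better, that they lie in $\langle\mathfrak{s}_h(K_h)\rangle\cap\Upsilon_{h+1}(Q_{h+1})$ so that the final equality holds in the central target $\Upsilon_{h+1}(Q_{h+1})/(\langle\mathfrak{s}_h(K_h)\rangle\cap\Upsilon_{h+1}(Q_{h+1}))$ where $\bar\mu_L^h(j)$ is asserted to live. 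This is precisely the content already used implicitly in declaring $\mathcal{C}_h$ and $\bar\mu_L^h(j)$ well defined, together with the inductive vanishing $\bar\mu_L^{h-1}(j)=0$; so I would organize the argument so that these well-definedness facts are cited rather than re-derived, and the proof reduces to the clean commutator computation above — exactly parallel to, and no harder than, the proof of Theorem \ref{thm1}.
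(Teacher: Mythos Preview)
Your approach is correct and is exactly what the paper does: it states that the proof ``is essentially the same as that of Theorem \ref{thm1}'', and your write-up carries out precisely that parallel argument in the quotient $\mathrm{Im}(\Upsilon_{h+1})/\langle\mathfrak{s}_h(K_h)\rangle$. One small clean-up: the telescoping already gives $B_{N_j}^{-\epsilon_{N_j}}\mathcal{C}_h(\alpha_{N_j})B_{N_j}^{\epsilon_{N_j}}=\mathcal{B}^{-1}\Upsilon_{h+1}(x_j)\mathcal{B}$ on the nose (since $\mathcal{B}=B_1^{\epsilon_1}\cdots B_{N_j}^{\epsilon_{N_j}}$ includes the last factor), so your parenthetical about an extra left factor $B_{N_j}^{\epsilon_{N_j}}$ is unnecessary and slightly confused; once that is removed the computation is clean and matches the paper's one-line version.
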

\noindent
The proof is essentially the same as that of Theorem \ref{thm1}.
In conclusion, it is natural to define higher invariants as follows:
\begin{defn}\label{def} We define the $h$-th $\bar{\mu}$-invariant by the $\# L$-tuple
$$( \bar{\mu}_{L}^{h} (1),\dots, \bar{\mu}_{L}^{h} (\# L)) \in \bigl( \Upsilon_{h+1} (Q_{h+1}) / \bigl( \langle \mathfrak{s}_h (K_h) \rangle \cap \Upsilon_{h+1} (Q_{h+1}) \bigr) \bigr)^{\# L}. $$
\end{defn}
Here, we should show that, this definition is essentially independent of the choice of the sections $\mathfrak{s}_h $, and
that our extension is universal in some sense. To be precise,
\begin{thm}\label{thm5} The map $\bar{f}_h$ induces the group isomorphism
$$\pi_1 (S^3 \setminus L) /\Gamma_h \pi_1 (S^3 \setminus L) \cong \mathrm{Im} \Upsilon_h / \mathcal{N}_{h ,L} .$$
\end{thm}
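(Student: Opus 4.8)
The plan is to prove Theorem~\ref{thm5} by induction on $h$, showing simultaneously that $\bar f_h$ is well-defined, surjective, and has kernel exactly $\Gamma_h \pi_1(S^3\setminus L)$. For the base case $h=m$ (or indeed $h=2$, starting from the abelianization), the statement is vacuous since $\mathcal N_{m,L}=0$ and $\mathrm{Im}\,\Upsilon_m\cong F/\Gamma_m F$; the claim reduces to the standard fact that $f_m$ induces an isomorphism $\pi_1(S^3\setminus L)/\Gamma_m\pi_1(S^3\setminus L)\cong F/\Gamma_m F$ in the range where Assumption $\mathcal A_m$ holds. (This uses Stallings' theorem, or Milnor's original argument, that the lower central quotients of a link group agree with those of the free group.)

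For the inductive step, suppose the isomorphism holds at level $h$, i.e.\ $\bar f_h\colon \pi_1(S^3\setminus L)/\Gamma_h\pi_1(S^3\setminus L)\xrightarrow{\ \sim\}\mathrm{Im}\,\Upsilon_h/\mathcal N_{h,L}$. First I would use Corollary~\ref{thm2}'s mechanism (now read modulo $\langle\mathfrak s_h(K_h)\rangle$, as arranged in \S\ref{SHigher}) to check that $\bar f_{h+1}$ is a well-defined homomorphism whose postcomposition with $[p_h]$ is $\bar f_h$; this is the diagrammatic/Wirtinger verification already sketched. Surjectivity of $\bar f_{h+1}$ then follows from surjectivity of $\bar f_h$ together with the fact that the central kernel $\Upsilon_{h+1}(Q_{h+1})/(\langle\mathfrak s_h(K_h)\rangle\cap\Upsilon_{h+1}(Q_{h+1}))$ is hit: indeed $Q_{h+1}$ is generated by the images of the maps $\mathcal I_j$ of \eqref{iij} applied to basis commutators, each of which is realized by commutators of meridians in $\pi_1(S^3\setminus L)$. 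The crux is the kernel computation. Here I would compare with Milnor's presentation \eqref{qq}: the group \eqref{qq} with $h$ replaced by $h+1$ surjects onto $\pi_1(S^3\setminus L)/\Gamma_{h+1}\pi_1(S^3\setminus L)$, and the relators $[x_j,w_j]$ with $w_j$ the $(h+1)$-truncated longitude word are precisely, under $\Upsilon_{h+1}$, the elements $\bar\mu_L^{h}(j)$ (up to the earlier normalizations $\langle\mathfrak s_h(K_h)\rangle$) by Theorem~\ref{thm4}. Thus $\mathcal N_{h+1,L}$ is exactly the image in $\mathrm{Im}\,\Upsilon_{h+1}$ of the relator subgroup, and quotienting by it yields the presentation~\eqref{qq}; one then invokes that \eqref{qq} is a presentation for $\pi_1/\Gamma_{h+1}\pi_1$.

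Two technical points need care. One is independence of the auxiliary choices—the section $\mathfrak s_h$ and the representatives $B_k$: the choice of $B_k$ is handled exactly as in \S\ref{Semb} via \eqref{kankei}, and changing $\mathfrak s_h$ alters $\bar\mu_L^h(j)$ only by an element of $\langle\mathfrak s_h(K_h)\rangle\cap\Upsilon_{h+1}(Q_{h+1})$, as already asserted before Definition~\ref{def}; I would record this as a lemma and feed it into the induction so that $\mathcal N_{h+1,L}$ is genuinely canonical. The other is matching $w_j$, which \emph{depends on $h$}, with the combinatorially-defined word $\mathcal B=B_1^{\epsilon_1}\cdots B_{N_j}^{\epsilon_j}$ from Figure~\ref{logifig}: one needs $p_h(\mathcal B)=\bar f_h(\mathfrak l_j)$ to agree with the class of Milnor's $w_j$ in $\mathrm{Im}\,\Upsilon_h/\mathcal N_{h,L}$, which is where the inductive hypothesis (the isomorphism at level $h$) is used essentially. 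I expect this reconciliation of the two descriptions of the longitude—Milnor's inductive word $w_j$ versus the diagrammatic cocycle $\mathcal C_h$—to be the main obstacle; everything else is assembling Stallings/Milnor input with the already-established Theorems~\ref{thm1} and~\ref{thm4}.
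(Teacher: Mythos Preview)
Your proposal is correct and follows essentially the same route as the paper: induction on $h$ with the base case $h=m$ supplied by the classical Milnor/Stallings result, and the inductive step carried out by viewing $\bar f_{h+1}$ as a map of central extensions over $\mathrm{Im}\,\Upsilon_h/\mathcal N_{h,L}$ and invoking Theorem~\ref{thm4} to identify Milnor's relators $[x_j,w_j]$ with the generators $\bar\mu_L^h(j)$ of $\mathcal N_{h+1,L}$. The paper's proof is terser---it does not spell out surjectivity or the independence-of-choices issues you flag---but the underlying argument is the same.
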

\begin{proof}
This claim has been made before for $h=m$: see \cite{Mil2} or \cite[Theorem 12.3]{Hil}.
If the claim is true for an $h$, the map $\bar{f}_{h+1}$ modulo $\Gamma_{h+1}$ is a homomorphism between central extensions over $\mathrm{Im} \Upsilon_h / \mathcal{N}_{h ,L}$.
Since $\bar{f}_{h+1} ( [x_j, w_j]) = \bar{\mu}_{L}^{h} (j) $ by Theorem \ref{thm4}, 
the quotient of $\bar{f}_{h+1}$ is an isomorphism.
\end{proof}
Incidentally, Theorems \ref{thm4} and \ref{thm5} imply that we can recover the longitude $ \bar{f}_h (\mathfrak{l}_j ) $ from $ \bar{\mu}_{L}^{h} (j) $ modulo $\mathcal{N}_{h ,L} $.
That is, information of $ \bar{f}_h (\mathfrak{l}_j ) $ and $ \bar{\mu}_{L}^{h} (j) $ are essentially equivalent.

Finally, we mention the original $\bar{\mu}$-invariants \cite{Mil1,Mil2}.
The original ones are seemingly far from universality as in the above lifting properties.
Moreover, they are considered by passage of certain ideals ``$\Delta(I)$" of the integer ring $\Z$ (instead of $\mathcal{N}_{h,L} $), in terms of a non-commutative ring.
Thus, the passage algebraically seems stronger than the subgroup $ \mathcal{N}_{h ,L}$, and
it is hard to check where the invariants are trivial or not.
In contrast, in \S \ref{Sb31}, we give some computation of our $\bar{\mu}$-invariants with non-triviality.


\section{Examples; links with crossing number $<8 $. }\label{Sexa}
To show that our computation is faster and more manageable than the previous ones, 
we will compute the (higher) Milnor invariants of some links in terms of $\Psi_m(j)$ and $\bar{\mu}_h(j)$.
In this section, we let $\mathrm{c}(L)$ denote the (minimal) crossing number of $L$, and let $\mathrm{lk}(L) \in \Z$ be the linking number if $q=2$. 

\subsection{Bracket and standard commutators. }\label{Sbra1}

To overcome difficulty (IV) and simply describe our computation,
we introduce a bracket in $\Omega_m$.
For $r,s \in \mathbb{N}$, let $\iota_{s}:\Omega_r \hookrightarrow \Omega_{r+s }$ be the canonical inclusion,
and $\kappa_{r}:\Omega_s \hookrightarrow \Omega_{r+s }$ be
the ring homomorphism defined by $\kappa_{r}(\lambda_{i}^{(j)})= \lambda_{i+r}^{(j)}$. Then, we can define a bilinear map
$$ [\bullet , \bullet ]:\Omega_r \times \kappa_{r} (\Omega_s) \lra \Omega_{r+s}; \ \ \ \ (a,\kappa_{r}(b)) \longmapsto \iota_s(a) \kappa_r(b)- \iota_r(b) \kappa_s (a).$$
As mentioned in \S 3, the center $ \Upsilon_s(Q_s)$ can be regarded as a submodule of $\Omega_s$, and this bilinear map descends to 
$$ [\bullet , \bullet ]: \Upsilon_r(Q_r) \times \kappa_{r} \bigl(\Upsilon_s(Q_s)\bigr) \lra \Upsilon_{r+s}(Q_{r+s}).$$
This bracket can be interpreted as the image of the commutator. More precisely, we have
\begin{equation}\label{ghgh}
\Upsilon_{r+s}(ghg^{-1}h^{-1}) = [\Upsilon_r(g) , \ \kappa_{r}\bigl(\Upsilon_s(h)\bigr) ] , \end{equation}
for any $g \in \Gamma_{r-1}F$ and $h \in \Gamma_{s-1}F$.
This equality can be easily shown by direct computation of $\Upsilon_{r+s}(ghg^{-1}h^{-1}) $ as upper triangular matrices.

We should mention Corollaries 2.2--2.3 in \cite{CFL}, which show that
certain (Jacobi) relations ``(S1), (S2), (S2$^{\circ }$), (S3)" on the (standard) commutators in $F/\Gamma_mF$ characterize a basis of $Q_m$.
Thus, using the formula \eqref{ghgh}, we can obtain similar relations for the bracket, and it is reasonable to express Milnor invariants in terms of our brackets (see Table \ref{table1}).

Furthermore, let us consider a simple example, i.e., the left collecting commutator.
Let $\mathfrak{S}_2$ be the permutation group on $2$ elements.
For a multi-index $J=(j_1\cdots j_n) \in \{1,2, \dots, q \}^{n}$ and $\sigma=(\sigma_1,\ldots, \sigma_{n-1}) \in (\mathfrak{S}_2)^{n-1}$,
we define $\sigma(J)=(j_1^{\sigma},\dots ,j_n^{\sigma}) \in \mathbb{N} ^n $ by
$$
(j_1^{\sigma},\dots, j_n^{\sigma})=\sigma_{n-1}(\sigma_{n-2}(\cdots\sigma_4(\sigma_3(\sigma_2(\sigma_1(j_1, j_2),j_3),j_4),j_5)\cdots),j_{n}).
$$
Then, the left collecting commutator is, by definition, formulated as follows:
$$
[[\cdots[[\lambda_1^{(j_1)}, \lambda_2^{(j_2)}],\lambda^{(j_3)}_3]\cdots],\lambda^{(j_n)}_n]=
\sum_{\sigma \in (\mathfrak{S}_2)^{n-1}} \mathrm{sign}(\sigma)\cdot \lambda_{1}^{(j_1^{\sigma})}\lambda_{2}^{(j_2^{\sigma})}\cdots \lambda_{n}^{(j_n^{\sigma})} \in \Omega_{n+1 }.
$$
For example, when $n=3$, the bracket is written as
\[ [[\lambda_1^{(i)}, \lambda_2^{(j)}], \lambda_3^{(k)}]= \lambda_{1}^{(i)} \lambda_{2}^{(j)} \lambda_{3}^{(k)}- \lambda_{2 }^{(i)} \lambda_{1}^{(j)} \lambda_{3}^{(k)} + \lambda_{3}^{(i)} \lambda_{2}^{(j)} \lambda_{1}^{(k)}- \lambda_{2 }^{(i)} \lambda_{3}^{(j)}
\lambda_{1}^{(k)} \in \Omega_4 .\]
In addition, formula \eqref{ghgh} inductively implies
\begin{equation}\label{lem9} \Upsilon_m( [[[\cdots[[x_{j_1},x_{j_2}],x_{j_3}]\cdots],x_{j_{m-2}}],x_{j_{m-1}}])= [[[\cdots[[ \lambda_{1}^{(j_1)}, \lambda_2^{(j_2)}], \lambda_3^{(j_3)}] \cdots], \lambda_{m-2}^{(j_{m-2})}], \lambda_{m-1}^{(j_{m-1})}] . \end{equation}

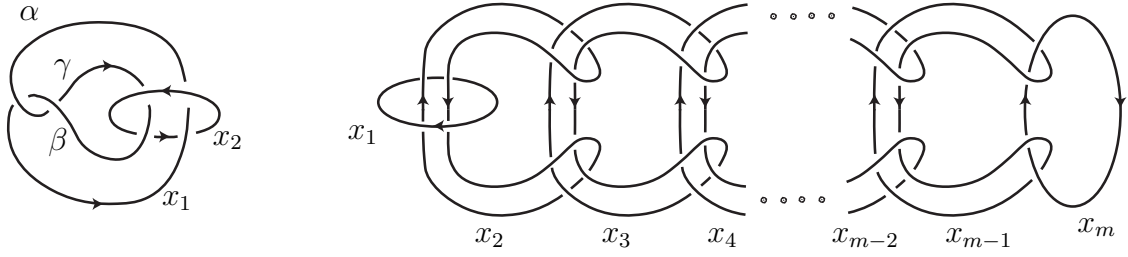
\begin{figure}[tpb]
\begin{center}
\begin{picture}(50,99)
\put(-138,15){\large \ \ \ \ $x_1 $}
\put(-103,38){\large $x_2 $}
\put(-181,36){\large \ \ \ \ $\beta$}
\put(-179,65){\large \ \ \ \ $ \gamma $}
\put(-192,85){\large \ \ \ \ $\alpha $}

\put(-52,40){\large $x_1 $}
\put(-4,1){\large $x_2 $}
\put(44,1){\large $x_3 $}
\put(84,1){\large $x_4$}

\put(131,1){\large $x_{m-2}$}
\put(174,1){\large $x_{m-1}$}
\put(224,5){\large $x_{m}$}

\put(-186,45){\pc{Whitehead3}{0.4884}}

\put(-46,47){\pc{Milnor.link}{0.49}}

\end{picture}
\end{center}
\vskip -1.1pc
\caption{\label{bb} Whitehead link and Milnor link of length $m$. }
\end{figure}
\noindent

\subsection{Whitehead link and small links. }\label{Sexample1}
Returning to the link invariants, we will first focus on the Whitehead link (cf. \cite[\S 10.3]{IO}, \cite[\S 8]{Mur} and \cite{Stein} as a known computation).
For this, fix meridians $x_1,x_2$ and three arcs $\alpha,\beta,\gamma$, as shown in Figure \ref{bb}.
Since all the invariants of degree $<4$ are known to be zero, we let $m=4$.
By using the Wirtinger presentation, the assignment $f : \{ \mathrm{arcs \ of \ }D\}\ra GL_5(\Omega_5) $ imposes the equations
$$ \mathcal{C}( \alpha)= \mathcal{C}(x_2)\mathcal{C}(x_1)\mathcal{C}(x_2)^{-1}, \ \ \ \mathcal{C}( \beta)= \mathcal{C}(\alpha)^{-1}\mathcal{C}(x_1)\mathcal{C}(\alpha), \ \ \ \mathcal{C}( \gamma)= \mathcal{C}(\beta)^{-1}\mathcal{C}(\alpha)\mathcal{C}(\beta ). $$
Accordingly, the assignment $\mathcal{C}(x_i)= \Upsilon_4(x_i)$ gives the following presentations:
$$ \!\!\!\!\!\!\!\! \mathcal{C}(\alpha)= {\small \left(
\begin{array}{ccccc}
1 & \lambda_{1}^{(1)} \!\!\!\! & [\lambda_{1}^{(2)}, \lambda_{2}^{(1)}] \!\!\!\! & [\lambda_{1}^{(1)}, \lambda_{2}^{(2)}] \lambda_{3}^{(2)}\!\!\!\! & [\lambda_{1}^{(1)}, \lambda_{2}^{(2)}] \lambda_{3}^{(2)}\lambda_{4}^{(2)} \\
0 & 1& \lambda_{2}^{(1)}& [\lambda_{2}^{(2)}, \lambda_{3}^{(1)}] & [\lambda_{2}^{(1)}, \lambda_{3}^{(2)}] \lambda_{4}^{(2)} \\
0 & 0& 1& \lambda_{3}^{(1)}& [\lambda_{3}^{(2)}, \lambda_{4}^{(1)}] \\
0 & 0& 0& 1 & \lambda_{4}^{(1)} \\
0 & 0& 0& 0 & 1
\end{array}
\right)},$$
$$\mathcal{C}(\beta)= {\small \left(
\begin{array}{ccccc}
1 & \lambda_{1}^{(1)} \!\! & 0 &[[\lambda_{1}^{(1)},\lambda_{2}^{(2)} ], \lambda_{3}^{(1)}] & [[\lambda_{2}^{(1)},\lambda_{3}^{(2)} ], \lambda_{4}^{(1)}] \lambda_{1}^{(1)}-[[\lambda_{1}^{(1)},\lambda_{2}^{(2)} ], \lambda_{3}^{(1)}] \lambda_{4}^{(2)} -[\lambda_{1}^{(1)},\lambda_{2}^{(2)} ][\lambda_{3}^{(1)},\lambda_{4}^{(2)} ] \\
0 & 1& \lambda_{2}^{(1)} \!\!\!\! & 0& [[\lambda_{2}^{(1)},\lambda_{3}^{(2)} ], \lambda_{4}^{(1)}]\\
0 & 0& 1& \lambda_{3}^{(1)}& 0 \\
0 & 0& 0& 1& \lambda_{4}^{(1)} \\
0 & 0& 0& 0 & 1
\end{array}
\right)}.
$$
Furthermore, the products $\Phi_{5}( j)$ in \eqref{fm+1} are, by definition, formulated as
\[ \Psi_{4}( 1) =\Upsilon_5(x_1)^{-1} \mathcal{C}( \alpha) \mathcal{C}( x_2)^{-1}\mathcal{C}( \beta)\mathcal{C}( x_2 ) \Upsilon_5(x_1)\mathcal{C}(x_2)^{-1} \mathcal{C}( \beta)^{-1} \mathcal{C}( x_2) \mathcal{C}( \alpha)^{-1}, \]
\[ \Psi_{4}( 2) =\Upsilon_5(x_2)^{-1} \mathcal{C}( x_1 )\mathcal{C}( \beta)^{-1} \Upsilon_5(x_2) \mathcal{C}( \beta) \mathcal{C}( x_1)^{-1} \in \Upsilon_5( Q_5). \]
An elementary computation (with the help of Mathematica) leads to the conclusion,
\[ \Psi_{4}( 1)_{(1,5)}= -\Psi_{4}(2)_{(1,5)}= - [[[\lambda_{1}^{(1)},\lambda_{2}^{(2)}],\lambda_{3}^{(1)}],\lambda_{4}^{(2)} ].  \]

Here, we compare the computations of $\Psi_{4}(1)$ and $\Psi_{4}(2)$ with the original Milnor invariant of the Whitehead link. 
As seen in \cite{Mil2,Mur,Stein}, the original one is known to be  
$$
\mu(ijkl) = \begin{cases}
    1 & \text{if}\  (ijkl)=(1122) \ \text{and its cyclic permutations,} \\
    -2 &  \text{if}\  (ijkl)=(1212)\  \text{and its cyclic permutations,} \\
    0 & \text{otherwise}.
    \end{cases}
$$
Here, $\mu(ijkl)$ is originally defined by the coefficient of $\lambda_1^{(i)}\lambda_2^{(j)} \lambda_3^{(k)}  $ of $\Upsilon_3 \circ f_{3}(\mathfrak{l}_l) $
\footnote{Strictly speaking, the original definition of Milnor invariants is used by the Magnus expansion instead of the unipotent Magnus expansion. However, we can easily see that our description and the original one are equivalent from the discussion in Appendix \ref{App}.}.
Thus, using the map $\mathcal{I}_j$ in Theorem \ref{thm1}, we can 
see that 
the above computation coincides with the original one. 

Similarly, 
we can easily compute (with a computer program) the first non-vanishing Milnor invariants of other links with $n$ crossings, where $m<11$ and $n<20$.
For example, for every 2-component link $L$ with $\mathrm{lk}(L)=0$ and $\mathrm{c}(L) <9$, 
we can check a list of the $m$-th Milnor invariants; see Table \ref{table1}, where we have used the abbreviation of the brackets:
$$\varUpsilon := [[[\lambda^{(1)}_1,\lambda^{(2)}_2],\lambda^{(1)}_3],\lambda^{(2)}_4] \in \Omega_5, \ \ \ \
\varLambda := [[[[[\lambda^{(1)}_1,\lambda^{(2)}_2],\lambda^{(1)}_3],\lambda^{(1)}_4],\lambda^{(1)}_5],\lambda^{(2)}_6] \in \Omega_7.$$
Although Stein gave such a list \cite[Table A1 ]{Stein},
we should point out that his computation of the link $ 8_{10}^2$ was incorrect.

Although we computed the $m$-th invariant of every 2-component link $L$ with $\mathrm{lk}(L)=0$ and $\mathrm{c}(L) =9$,
the invariants are constant multiples of $\varUpsilon$. 
To conclude, the $m$-th Milnor link invariants are not so strong for links with $\mathrm{lk}(L)=0$ and $\# L=2.$

\begin{table}[htb]
$$ \begin{tabular}{lcrrrrrrr}\hline
Link & $5_1^2$ & $7_4^2$ & $7_6^2$ & $7_8^2$ & $8_{10}^2$ & $8_{12}^2$ & $8_{13}^2$ & $8_{15}^2$ \\ \hline \hline
$m$ & $4$ & $4$ & $4$ & $4$ & $6$ & $6$ & $4$ & $4$\\ \hline
$\Psi_{m}(1)$ & $ \varUpsilon $ & $ 2 \varUpsilon $ & $ \varUpsilon $ & $ \varUpsilon $& $ \varLambda$ & $ \varLambda$ & $ \varUpsilon $ & $ \varUpsilon $\\ \hline
$\Psi_{m}(2)$ & $ -\varUpsilon $ & $ -2 \varUpsilon $ & $- \varUpsilon $ & $- \varUpsilon $& $ - \varLambda$ & $ - \varLambda$ & $ -\varUpsilon $ & $ -\varUpsilon $\\ \hline
\end{tabular}$$
\caption{\label{table1}
The first non-vanishing Milnor invariants of links with linking number zero. } 
\end{table}

\subsection{Examples of higher $\bar{\mu}$-invariants}\label{Sb31}
We will diagrammatically compute the $\bar{\mu}$-invariants in the same way as \S 4.

Before the computation, we should analyze the generators of the group $\Upsilon_{h+1} (Q_{h+1}) / \bigl( \langle \mathfrak{s}_h (K_h) \rangle \cap \Upsilon_{h+1} (Q_{h+1}) \bigr) $ in \eqref{fm+1122}; see Proposition \ref{oo} below.
Recalling the bracket in \S \ref{Sbra1}, we will define a subgroup $\Delta_{h}$ of $\Upsilon_{h+1}(Q_{h+1})$
by induction: First, let $\Delta_m$ be zero.
Define $ \Delta_{m+1} $ as the abelian group generated by $\bigl\{ \ [\Psi_m (\ell), \Upsilon_{2}(x_j) ]\ | \ \ell \leq q, j \leq q \ \bigr\}.$
Next, if we know $\Delta_m,\dots, \Delta_h $, we can define $\Delta_{h+1}$ to be the abelian group generated by the following set:
$$ \bigl\{ [d_k, \eta ] \ | \ k \leq h , \ d _k \in \Delta_{k-1}, \ \eta \in \Upsilon_{m-k+1} (Q_{m-k+1}) \ \bigr\} \cup \bigl\{ \ [\bar{\mu}_L^{h-1}(\ell), \Upsilon_{2}(x_j) ]\ | \ \ell \leq q, j \leq q \ \bigr\} . $$
From the triangularity of $\mathrm{Im}\Upsilon_h$, the following proposition can be easily shown from 
the inductive construction of $\mathcal{N}_{h,L}$.
Recall that $\bar{\mu}_L^h(j)$ defined in \eqref{fm+1122} lies in the group
$\Upsilon_{h+1} (Q_{h+1}) / \bigl( \langle \mathfrak{s}_h (K_h) \rangle \cap \Upsilon_{h+1} (Q_{h+1}) \bigr) $. 

\begin{prop}\label{oo} The group $\langle \mathfrak{s}_h (K_h) \rangle \cap \Upsilon_{h+1} (Q_{h+1}) $ 
is equal to $\Delta_{h}.$
\end{prop}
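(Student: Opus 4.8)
The plan is to unravel the inductive definitions of $\mathcal{N}_{h,L}$ and $\Delta_h$ in parallel, and to show that each generator of $\Delta_h$ is exactly a central element produced when the normal closure $\langle \mathfrak{s}_h(K_h)\rangle$ is intersected with the top corner $\Upsilon_{h+1}(Q_{h+1})$. First I would fix notation: write $N_h:=\langle \mathfrak{s}_h(K_h)\rangle \subset \mathrm{Im}(\Upsilon_{h+1})$, so that the claim is $N_h \cap \Upsilon_{h+1}(Q_{h+1}) = \Delta_h$. I would induct on $h$, the base case $h=m$ being trivial since $K_m=\emptyset$ gives $N_m=\{I_{m+1}\}$ and $\Delta_m=0$.

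For the inductive step, recall that $K_{h+1}=\mathfrak{s}_h(K_h)\cup\{\bar\mu^h_L(1),\dots,\bar\mu^h_L(\#L)\}$, so $N_{h+1}$ is normally generated inside $\mathrm{Im}(\Upsilon_{h+2})$ by $\mathfrak{s}_{h+1}\mathfrak{s}_h(K_h)$ together with the sections $\mathfrak{s}_{h+1}(\bar\mu^h_L(\ell))$. The key structural fact I would exploit is triangularity: for an element $g$ of $\mathrm{Im}(\Upsilon_{h+2})$ lying ``at level $k$'' (i.e.\ $g-I \in \bigoplus_{i} \Omega E_{i,i+k}$ modulo higher levels), conjugation by a generator $x_j$ — or by any element of $\mathrm{Im}(\Upsilon_2)$ — and the formula \eqref{ghgh} show that commutators raise the level, while products at the same level simply add. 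Concretely, a word in the normal generators of $N_{h+1}$, evaluated and then projected to $\Upsilon_{h+2}(Q_{h+2})$ (the top level $k=h+1$), only survives if the contributions outside the top corner cancel; using \eqref{kankei} and centrality this cancellation forces the surviving part to be a sum of brackets $[\delta,\eta]$ where $\delta$ is a level-$k$ piece already in $N_h$ (hence in $\Delta_h$ by the inductive hypothesis, or one of the $\mathfrak{s}_h(K_h)$ generators, i.e.\ a $\bar\mu^{k-1}_L(\ell)$-type element lying in some $\Delta_{k-1}$) and $\eta$ ranges over $\Upsilon_{m-k+1}(Q_{m-k+1})$, plus the new generators $[\bar\mu^{h-1}_L(\ell),\Upsilon_2(x_j)]$. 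Matching these two lists against the defining set of $\Delta_{h+1}$ gives one inclusion; the reverse inclusion $\Delta_{h+1}\subseteq N_{h+1}\cap\Upsilon_{h+2}(Q_{h+2})$ is the easier direction, since every listed generator of $\Delta_{h+1}$ is visibly a commutator of an element of $N_{h+1}$ with a generator, hence in the normal closure, and is central by construction.

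The main obstacle I expect is the bookkeeping in the ``only survives if things cancel'' step: one must argue that when a product of conjugates of the normal generators lands in the center $\Upsilon_{h+2}(Q_{h+2})$, the lower-corner cancellations do not introduce genuinely new central terms beyond those recorded in $\Delta_{h+1}$. This is where the triangularity of $\mathrm{Im}\Upsilon_h$ and the precise form of Lemma \ref{lemapp1} are essential — each off-diagonal band is controlled independently, so a normal word is central iff every band below the top cancels, and the residue in the top band is additive in the contributions, which are exactly brackets of the allowed shape. I would present this as a filtration/grading argument on the bands $E_{i,i+k}$, reducing the group-theoretic normal-closure computation to a linear-algebra statement over the commutative ring $\Omega_{h+2}$, where ``normally generated'' becomes ``generated as a module by iterated brackets,'' matching the inductive definition of $\Delta_{h+1}$ verbatim. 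The remaining verifications are the routine direct matrix computations the paper elsewhere defers.
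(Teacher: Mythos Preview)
Your proposal is correct and matches the paper's approach: the paper itself offers no detailed proof of Proposition~\ref{oo}, stating only that it ``can be easily shown from the triangularity of $\mathrm{Im}\,\Upsilon_h$'' together with ``the inductive construction of $\mathcal{N}_{h,L}$.'' Your induction on $h$ with a filtration by the off-diagonal bands of the upper-triangular matrices is exactly what this sentence points to, and you have fleshed out considerably more of the mechanism (the role of \eqref{ghgh} in raising levels, the additivity in the top corner, and the identification of the surviving residues with iterated brackets) than the paper does.
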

As a result, if we concretely describe the finite generators of $\Delta_{h} $, it is not so hard to 
check for the non-triviality of the higher $\bar{\mu}$-invariant. 

Some examples are shown below without any detailed proof:
\begin{exa}\label{exa3}
Concerning the links $5_1^2 , \ 7_6^2, \ 7_8^2, \ 8_{13}^2 $, the first non-vanishing invariants with $m=4$ are equal (see Table \ref{table1}).
We will briefly mention the higher invariants.
When $h=5$, 
our $\bar{\mu}_L^5(j)$ invariants are unfortunately equal to $ \pm [[[[\lambda_1^{(2)}, \lambda_2^{(1)}],\lambda_3^{(1)}],\lambda_4^{(1)}],\lambda_5^{(2)}]$
(it is worth pointing out that the original $\bar{\mu}$-invariants of length $5$ are zero).
In addition, if $h=6,7$, the higher invariants of degree $h$ are zero. 
\end{exa}
In contrast, our experience has shown that the higher invariants are useful for links with $\# L\geq 3$, and for links of $\# L=2$ with $ \mathrm{lk}(L)\geq 2$.
We give some examples below.
\begin{exa}\label{exa2}Let $L$ be the Borromean rings $6^3_2$. Since the first non-vanishing invariant $\bar{\mu}_L^{3}(j) $ is
$[[\lambda_1^{(j)}, \lambda_2^{(j+1)} ], \lambda_3^{(j+2)}]$ with $j \in \Z/3$ by Theorem \ref{thm7}, the group $\Delta_4$ reduces to
$$\Delta_4 = \Z \langle[[[\lambda_1^{(j)}, \lambda_2^{(j+1)} ], \lambda_3^{(j+2)}], \lambda_4^{(k)}] \rangle_{j,k \in \Z/3 }.$$
Furthermore, if $h = 4$, we obtain the resulting computation:
$$ \bar{\mu}_L^{4}(j) \equiv [[[\lambda_1^{(j)}, \lambda_2^{(j+1)} ], \lambda_3^{(j+1)}], \lambda_4^{(j+2)}] \ \ \ \mathrm{modulo} \ \Delta_4. $$
Hence, if $h = 5$, the group $\Delta_5$ is spanned by
$$\langle \ [[ [[\lambda_1^{(j)}, \lambda_2^{(j+1)} ], \lambda_3^{(j+2)}], \lambda_4^{(k)}],\lambda_5^{(\ell)} ] , \
[[[\lambda_1^{(j)}, \lambda_2^{(j+1)} ], \lambda_3^{(j+2)}], [\lambda_4^{(k)},\lambda_5^{(\ell)}] ], \
[\bar{\mu}_L^{4}(j) ,\lambda_5^{(k)} ] \ \rangle_{j,k, \ell \in \Z/3 }.$$
Furthermore, if $h = 5$, we can similarly obtain the computation:
$$ \bar{\mu}_L^{5}(j) \equiv [[[[\lambda_1^{(j)}, \lambda_2^{(j+1)} ], \lambda_3^{(j+1)}], \lambda_4^{(j+1)}],\lambda_5^{(j+2)} ] \ \ \ \mathrm{modulo} \ \Delta_5. $$
Then, from the description of $\Delta_k$ with $k = 4,5$, these $\bar{\mu}_L^{k+1}(j)$ are not zero modulo $\Delta_{k+1}$.
Furthermore, in our experience, $\bar{\mu}_L^{h}(j)$ becomes more complicated as $h$ increases.

We can verify, by computation, that
the links with $\mathrm{c}(L)<11$ whose first non-vanishing invariant is the same as that of the Borromean rings are only the links $L'=9_{n25}^3$ and $L''=10_{a151}^3$.
In addition, the 4-th invariants can be computed as
\begin{eqnarray*}
\bar{\mu}_{L'}^{4}(1) & \equiv & [[[\lambda_1^{(2)}, \lambda_2^{(3)} ], \lambda_3^{(3)}], \lambda_4^{(1)}] -[[[\lambda_1^{(3)}, \lambda_2^{(2)} ], \lambda_3^{(2)}], \lambda_4^{(1)}]-[[[\lambda_1^{(3)}, \lambda_2^{(1)} ], \lambda_3^{(1)}], \lambda_4^{(3)}],\\
\bar{\mu}_{L'}^{4}(2) &\equiv & [[[\lambda_1^{(2)}, \lambda_2^{(3)} ], \lambda_3^{(3)}], \lambda_4^{(1)}], \\
\bar{\mu}_{L'}^{4}(3) & \equiv & [[[\lambda_1^{(3)}, \lambda_2^{(1)} ], \lambda_3^{(1)}], \lambda_4^{(3)}] ,\\
\bar{\mu}_{L''}^{4}(1) & \equiv & [[[\lambda_1^{(2)}, \lambda_2^{(3)} ], \lambda_3^{(3)}], \lambda_4^{(1)}] -[[[\lambda_1^{(1)}, \lambda_2^{(2)} ], \lambda_3^{(2)}], \lambda_4^{(1)}],\\
\bar{\mu}_{L''}^{4}(2) &\equiv & [[[\lambda_1^{(3)}, \lambda_2^{(1)} ], \lambda_3^{(1)}], \lambda_4^{(1)} +\lambda_4^{(2)}]- [[[\lambda_1^{(3)}, \lambda_2^{(2)} ], \lambda_3^{(2)}], \lambda_4^{(3)}],\\
\bar{\mu}_{L''}^{4}(3) & \equiv& [[[\lambda_1^{(1)}, \lambda_2^{(2)} ], \lambda_3^{(2)}], \lambda_4^{(3)}].
\end{eqnarray*}
Thus, we may hope that the higher invariants are strong for $\# L \geq 3$.
Furthermore, it is interesting that the multivariable Alexander polynomials $\Delta_L$ of $L=6^3_2$ and $L'=9_{n25}^3$ are equal (cf. \cite[Theorems 4.1--4.3]{Mur2} which discussed the relation between $\Delta_L$ and $\mu$-invariants).

We also remark that the Borromean rings is a special case of $n$-th Milnor link with $n=3$. For $n$-th Milnor link, we can determine the all the first non-vanishing Milnor invariant (c.f. Appendix \ref{App3}).
\end{exa}
\begin{exa}\label{exa3}
Next, we will focus on the case $ \# L =2$.
The previous papers on Milnor invariants, e.g., \cite{IO, Stein, Hil}, mainly considered links with $ \mathrm{lk}(L) =0$.
\begin{lem}\label{lem99}
Assume $ \# L =2$.
Every higher $\bar{\mu}$-invariant of $L$ is annihilated by $ \mathrm{lk}(L) \in \Z. $
\end{lem}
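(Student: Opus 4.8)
The plan is to exploit the presentation \eqref{qq} of the quotient $\pi_1(S^3\setminus L)/\Gamma_h$ together with the fact that for a two-component link the longitude $\mathfrak{l}_1$ lies in the normal subgroup generated by $x_2$ (and vice versa), so that $\mathfrak{l}_1$ is "divisible" by $\mathrm{lk}(L)$ in an appropriate abelian sense. More precisely, in $\pi_1(S^3\setminus L)$ the longitude $\mathfrak{l}_1$ is a product of conjugates of $x_2^{\pm 1}$, and its image under abelianization is $x_2^{\mathrm{lk}(L)}$. Passing to $F/\Gamma_2 F=\Z^2$ via $f_2$, the class of $\mathfrak{l}_j$ is $\mathrm{lk}(L)$ times a generator; the point is to lift this divisibility through the central extensions \eqref{kihon2}, working in $\mathrm{Im}\Upsilon_h/\mathcal{N}_{h,L}$.

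First I would recall from Theorem \ref{thm4} that $\bar{\mu}_L^h(j)=[\Upsilon_{m+1}(x_j),\mathfrak{s}_h(\bar{f}_h(\mathfrak{l}_j))]$, so it suffices to show that $\bar{f}_h(\mathfrak{l}_j)$, viewed in $\mathrm{Im}\Upsilon_h/\mathcal{N}_{h,L}$, becomes $\mathrm{lk}(L)$-divisible modulo terms that bracket trivially against $\Upsilon_{m+1}(x_j)$. For this I would use Theorem \ref{thm5} to identify $\mathrm{Im}\Upsilon_h/\mathcal{N}_{h,L}$ with $\pi_1(S^3\setminus L)/\Gamma_h$, and work in the latter. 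In that group one writes $\mathfrak{l}_1$ as a word $\prod_k (w_k x_2^{\epsilon_k} w_k^{-1})$ with $\sum_k \epsilon_k=\mathrm{lk}(L)$; then modulo $\Gamma_2$ this is $x_2^{\mathrm{lk}(L)}$, and one analyzes the correction terms in $\Gamma_2/\Gamma_h$. The bracket $[x_1,-]$ applied to a conjugate $w x_2^{\epsilon} w^{-1}$ equals $[x_1, x_2^\epsilon]$ up to higher-commutator corrections (using the identity $[x_j,z]=[x_j,p_m(z)]$ noted in the proof of Theorem \ref{thm1}, and bilinearity of the bracket \eqref{ghgh}); collecting these, $\bar{\mu}_L^1(1)$-type expansions show $\bar{\mu}_L^h(1)$ is a $\Z$-linear combination in which each term carries a factor that, when the whole expression is reduced modulo $\Delta_h=\langle\mathfrak{s}_h(K_h)\rangle\cap\Upsilon_{h+1}(Q_{h+1})$ (Proposition \ref{oo}), is a multiple of $\mathrm{lk}(L)$ — essentially because $[\Upsilon(x_1),\kappa(\Upsilon(x_2))]$ occurs with total coefficient $\mathrm{lk}(L)$ and all deviations from this are themselves pushed into $\Delta_h$ or killed by the relation $[x_1,w_1]=1$.

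The cleanest route is probably an induction on $h$: for $h=m$ this is the statement that the ordinary $m$-th Milnor invariant of a two-component link is $\mathrm{lk}(L)$-divisible, which follows from Milnor's formula $f_m(\mathfrak{l}_j)$ being expressible via the Magnus expansion of the longitude, whose degree-one part is $\mathrm{lk}(L)$ (this is classical; see \cite{Mil2,Hil}). For the inductive step I would take the section $\mathfrak{s}_h$, lift the relation $[x_1,w_1]=1$ from $\pi_1/\Gamma_h$, and observe that $\bar{\mu}_L^h(1)$ measures exactly the failure of $[x_1, \mathfrak{s}_h(w_1)]$ to be trivial in the next central extension; since $w_1$ (modulo $\Gamma_h$) is $\mathrm{lk}(L)$-divisible up to elements of $\mathcal{N}_{h,L}$ by the inductive hypothesis, and the bracket is bilinear, the obstruction $\bar{\mu}_L^h(1)$ is $\mathrm{lk}(L)$ times a class in $\Upsilon_{h+1}(Q_{h+1})/\Delta_h$. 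The same argument applies to $j=2$ symmetrically. The main obstacle I anticipate is making the phrase "$w_j$ is $\mathrm{lk}(L)$-divisible modulo $\mathcal{N}_{h,L}$" precise: one must control not only the abelianized class but the full coset of $w_j$ in $Q_h$ under the Magnus/Fox-derivative description (Lemma \ref{lemapp1}), showing that all higher Fox derivatives $\varepsilon(D_{k_1\cdots}( \mathfrak{l}_j))$ that survive modulo $\Delta_h$ inherit the factor $\mathrm{lk}(L)$ from the Wirtinger word $\prod_k(w_k x_2^{\epsilon_k}w_k^{-1})$; this is a bookkeeping computation with the shuffle/derivation formulas, but it is where the real content lies.
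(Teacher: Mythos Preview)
Your proposal has a fundamental misdirection: you are trying to show that $\bar{\mu}_L^h(j)$ is \emph{divisible} by $\mathrm{lk}(L)$ in the quotient $\Upsilon_{h+1}(Q_{h+1})/\Delta_h$, but the lemma asserts that it is \emph{annihilated} by $\mathrm{lk}(L)$, i.e.\ that $\mathrm{lk}(L)\cdot\bar\mu_L^h(j)=0$ there. These are different statements, and your divisibility claim is in fact false. In the table following Lemma~\ref{lem99} the quotient $\Upsilon_6(Q_6)/\Delta_5$ for, say, $L=6_1^2$ is $3$-torsion (note the ``$3$'' among the listed generators of $\Delta_5$), so the only $3$-divisible element is $0$; yet $\bar\mu_L^5(1)=-A-C\neq 0$. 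Thus no argument concluding ``$\bar\mu_L^h(j)=\mathrm{lk}(L)\cdot(\text{something})$'' can succeed, and your inductive hypothesis that ``$w_j$ is $\mathrm{lk}(L)$-divisible modulo $\mathcal{N}_{h,L}$'' already fails beyond the first step.

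The paper's proof works on the \emph{denominator}, not the numerator. Because $\#L=2$, the first non-vanishing invariant is the linking number itself: $\Psi_2(\ell)=\pm\,\mathrm{lk}(L)\,[\lambda_1^{(1)},\lambda_2^{(2)}]$, which is $\mathrm{lk}(L)$ times the single generator of $Q_3\cong\Z$. By Proposition~\ref{oo}, $\Delta_h$ is generated by iterated brackets built from $\Psi_2(\ell)$; since for $q=2$ every element of $Q_{h+1}$ is a $\Z$-linear combination of left-normed commutators $[[\cdots[[x_1,x_2],x_{i_3}],\cdots],x_{i_h}]$, one obtains $\mathrm{lk}(L)\cdot\Upsilon_{h+1}(Q_{h+1})\subset\Delta_h$ for every $h>2$. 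Hence the entire quotient group in which $\bar\mu_L^h(j)$ lives is $\mathrm{lk}(L)$-torsion, and the lemma follows immediately. Your analysis of the Wirtinger word for the longitude is unnecessary and aims at the wrong target.
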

\begin{proof}
Since the first non-vanishing invariant forms $ \mathrm{lk}(L) [\lambda_1^{(1)},\lambda_2^{(2)}]$,
Proposition \ref{oo} implies that the group $\Delta_h$ with $h>2$ is annihilated by $ \mathrm{lk}(L) \in \Z $;
so are the higher $\bar{\mu}$-invariants. \end{proof}
However, we hope that the higher invariants in the case $ \mathrm{lk}(L)\geq 3$ are powerful.
For example, let us focus on the links with $\# L=2$ with $ \mathrm{lk}(L)=3$.
The table below is a list of all the links with $\mathrm{c}(L)\leq 9$ and $ \mathrm{lk}(L)=3$
and of the associated higher $\bar{\mu}$-invariants.
To conclude, we can verify from the delta $\Delta_5$ in this table that the higher $\bar{\mu}$-invariants mutually detect the links.
\begin{table}[htb]
$$ \begin{tabular}{lcrrrr}\hline
Link $L$ & $\bar{\mu}_{L}^3(j)$ & $\bar{\mu}_{L}^4(j)$ & $\Delta_5 $ & $\bar{\mu}_{L}^5(1)$ & $\bar{\mu}_{L}^5(2)$ \\ \hline \hline
$6_1^2$ & 0 & $2 b_1+b_2+ b_3 $ & $\langle 3, B+D-F, A+C +E \rangle $ & $ -A-C $ & $ -A+C+D $ \\ \hline
$6_2^2$ & 0 & $2 b_1+b_2 $ & $\langle 3, B- E, A-C \rangle $ & $ D-F $ & $ B-F$ \\ \hline
$8_{a10}^2$ & 0 & $2 b_1+b_2 -b_3$ & $\langle3, B-D+F, A-C -E \rangle $& $C$ & $ -C$ \\ \hline
$8_{a11}^2$ & 0 & $2 b_1-b_2 +b_3$ & $\langle3, B+D-F, A+C +E \rangle $ & $-C$ & $ C$ \\ \hline
$9_{a23}^2$ & 0 & $2 b_1+b_2 +b_3$ & $\langle3, B+D-F, A+C +E \rangle $& $-A+ B+D$ & $ -A+C-D$ \\ \hline
$9_{a28}^2$ & 0 & $2 b_1 +b_2$ & $\langle 3, B- E, A-C \rangle $ & $-D-F$ & $ -B-D-F$ \\ \hline
$9_{a32}^2$ & 0 & $2 b_1+b_2 +b_3$ & $\langle3, B+D-F, A+C +E \rangle $& $-A+C$ & $A-C $ \\ \hline
$9_{a33}^2$ & 0 & $2 b_1+b_2 -b_3$ & $\langle3, B-D+F, A-C -E \rangle $ & $ -A+B+C$ & $ -A+B+C $ \\ \hline
$9_{n15}^2$ & 0 & $2 b_1+b_2 +b_3$ & $\langle3, B+D-F, A+C +E \rangle $ & $ -A-C+D$ & $ -A+C-D $ \\ \hline
$9_{n16}^2$ & 0 & $2 b_1+b_2 +b_3$ & $\langle3, B+D-F, A+C +E \rangle $ & $ -A-C+D$ & $ B-C+D $ \\ \hline
\end{tabular} $$
\end{table}

Here, $j \in \{ 1,2\}$, and the following formulas define the symbols:
$$b_1=[[[\lambda_{1}^{(1)},\lambda_{2}^{(2)}],\lambda_{3}^{(2)}],\lambda_{4}^{(2)}] , \ \ b_2=[[[\lambda_{1}^{(2)},\lambda_{2}^{(1)}],\lambda_{3}^{(1)}],\lambda_{4}^{(1)}] , \ \ b_3=[[[\lambda_{1}^{(1)},\lambda_{2}^{(2)}],\lambda_{3}^{(2)}],\lambda_{4}^{(1)}],$$
$$A= [[[[\lambda_{1}^{(1)},\lambda_{2}^{(2)}],\lambda_{3}^{(2)}],\lambda_{4}^{(2)}] ,\lambda_{5}^{(2)}], \ \ \ \ \ \ \ \
B= [[[[\lambda_{1}^{(1)},\lambda_{2}^{(2)}],\lambda_{3}^{(2)}],\lambda_{4}^{(2)}] ,\lambda_{5}^{(1)}],$$
$$C= [[[[\lambda_{1}^{(2)},\lambda_{2}^{(1)}],\lambda_{3}^{(1)}],\lambda_{4}^{(1)}] ,\lambda_{5}^{(2)}], \ \ \ \ \ \ \ \
D= [[[[\lambda_{1}^{(2)},\lambda_{2}^{(1)}],\lambda_{3}^{(1)}],\lambda_{4}^{(1)}] ,\lambda_{5}^{(1)}],$$
$$E= [[[[\lambda_{1}^{(2)},\lambda_{2}^{(1)}],\lambda_{3}^{(1)}],\lambda_{4}^{(2)}] ,\lambda_{5}^{(2)}], \ \ \ \ \ \ \ \
F= [[[[\lambda_{1}^{(1)},\lambda_{2}^{(2)}],\lambda_{3}^{(2)}],\lambda_{4}^{(1)}] ,\lambda_{5}^{(1)}] .$$
\end{exa}

\subsection*{Acknowledgments}
The authors wish to express their gratitude to Professors Akira Yasuhara and Kazuo Habiro for their valuable comments. They also gratefully acknowledge
valuable comments provided by the anonymous referee.

\appendix \section{Appendix: the original Magnus expansion.}\label{App}
When studying the $\bar{\mu}$-invariant, one often uses the Magnus expansion (see \cite{Mil1,IO,Hil}).
Thus, we will describe the relation between the expansion and the unipotent one $\Upsilon_m$. Here, we should recall the definition of $\Upsilon_m $ in \S \ref{Semb}. 

First we will mention the shuffle relation \eqref{shu}.
As in \cite[\S 2]{CFL}, a sequence $(c_1c_2 \cdots c_k) \in \{ 1,\dots ,n\}^k$ is called {\it the resulting shuffle of two sequences} $I=a_1 a_2 \cdots a_{|I|} $ and $J=b_1b_2 \cdots b_{|J|}$
if there are $|I|$ indexes $\alpha (1), \alpha(2), \cdots ,\alpha(|I|)$ and $|J|$ indexes $\beta (1), \beta (2), \dots, \beta (|J|)$ such that
\begin{enumerate}[(i)]
\item $ 1 \leq \alpha (1) < \alpha (2)< \cdots < \alpha (|I|) \leq k,$ \ and $ \
\ 1 \leq \beta (1) < \beta (2)< \cdots < \beta (|J|) \leq k$.
\item $c_{\alpha(i)}= a_i$ and $c_{\beta(j)}= b_j$ for some $i \in \{ 1,2, \dots, |I|\}$ \ $j \in \{ 1,2, \dots, |J|\} $.
\item each index $s \in \{ 1, 2, \dots ,k \}$ is either an $\alpha (i)$
for some $i$ or a $\beta(j)$ for some $j$ or both.
\end{enumerate}
Let the symbol $\mathrm{Sh} (I,J)$ denote the set of resulting shuffles of $I$ and $J$.
Then, \cite[Lemma 3.3]{CFL} shows the following shuffle relation, for any multi-indexes $I$ and $J$ and $y \in F$: 
\begin{equation}\label{shu}\varepsilon (D_{I}(y)) \cdot \varepsilon (D_{J}(y)) =\sum_{K \in \mathrm{Sh}(I,J)} \varepsilon (D_{K}(y)) \in \Z.
\end{equation}

In addition, we will review the Magnus expansion modulo degree $m$.
Let $\Z \langle X_1,\dots, X_q\rangle $ be the polynomial ring with non-commutative indeterminates $X_1,\dots, X_q $, and $\mathcal{J}_{m}$ be the two-sided ideal generated by polynomials of degree $\geq m $.
Then, {\it the Magnus expansion (of the free group $F$)} is the map
$ \mathcal{M}: F \ra \Z \langle X_1,\dots, X_q \rangle /\mathcal{J}_{m} $
defined by
\begin{equation}\label{mag} \mathcal{M}(y) = \varepsilon (y)+ \sum_{ n =1}^m \ \sum_{ (i_1, \dots , i_n) \in \{ 1, 2, \dots, q \}^n }\varepsilon (D_{ i_1 \cdots i_n }(y))\cdot X_{i_1} X_{i_2}\cdots X_{i_n} .
\end{equation}
As is known, this $\mathcal{M}$ is a homomorphism, and $\mathcal{M} (\Gamma_m F ) =0 $. By passage to this $\Gamma_m F$, it further induces
an injective homomorphism
$$ \mathcal{M}:F/ \Gamma_m F \lra \Z \langle X_1,\dots, X_q \rangle /\mathcal{J}_{m}. $$
Moreover, it follows from \cite[Theorem 3.9]{CFL} that the image is completely characterized by
\begin{equation}\label{image} \Bigl\{ \sum_{ I= (i_1 \cdots i_n) } a_I \cdot X_{i_1} \cdots X_{i_n} \ \Bigl| \ \mathrm{For \ any \ indexes} \ J \mathrm{\ and \ } K, \ \ \ a_J \cdot a_K =\sum_{L \in \mathrm{Sh}(J,K)} a_L \in \Z \ \Bigr\} .
\end{equation}

Hence, compared with Lemma \ref{lemapp1},
the correspondence $1+ X_i \mapsto \Upsilon_m( x_i) $ yields the isomorphism
$\mathrm{Im}\mathcal{M} \ra \mathrm{Im}(\Upsilon_m)$.
In conclusion, from \eqref{image}, we can characterize this $ \mathrm{Im}(\Upsilon_m) $ as a subgroup of $ GL_m(\Omega_m) $.

\section{Relation to quandle cocycle invariant.}\label{App2}
This section gives another diagrammatic computation of the Milnor invariants, in the sense of a quandle cocycle invariant \cite[\S 5]{CEGS}.

For this, we set up the map below \eqref{psidef}.
Using the explicit formula of $\mathrm{Im}(\Upsilon_m)$ in Lemma \ref{lemapp1} and the shuffle relation in \eqref{image},
we can concretely describe the set-theoretical section $\mathfrak{s}: \mathrm{Im}(\Upsilon_m)\ra \mathrm{Im}(\Upsilon_{m+1})$ (Here, the choice is a problem in the $(1,m+1)$th entry).
Then, according to $\epsilon \in \{ \pm 1\}$, let us define a map
\begin{equation}\label{psidef}
\phi^{\epsilon}_m : \mathrm{Im}(\Upsilon_m) \times \mathrm{Im}(\Upsilon_m) \lra GL_{m+1} (\Omega_{m+1} )
\end{equation}
by the gap between the section $\mathfrak{s}$ and conjugacy.
To be precise, we have
$$ \phi^{\epsilon}_m(A,B) = \left\{
\begin{array}{ll}
\mathfrak{s}(B)^{-1} \mathfrak{s}(A) \mathfrak{s}(B) \mathfrak{s}(B^{-1} AB)^{-1}, &\quad \mathrm{if \ } \epsilon =1, \\
\mathfrak{s}(B) \mathfrak{s}(A) \mathfrak{s}(B)^{-1} \mathfrak{s}(B AB^{-1})^{-1}, &\quad\mathrm{if \ } \epsilon =-1.
\end{array}
\right.$$
Since $ p_{m} \circ \phi^{\epsilon}_m $ is trivial, the image of $\phi^{\epsilon}_m$ is contained in the center $ \Ker (p_{m}) = \Upsilon_{m+1}( Q_{m+1}) . $

Next, we come to Proposition \ref{ppp2}.
Choose a link diagram $D$ of $L$, and suppose Assumption $\mathcal{A}_m$.
As in \S \ref{Sthm}, let us recall again the arcs $\alpha_k $ and $ \beta_k $, as well as the sign $\epsilon_k $ from Figure \ref{logifig},
Then, from the Wirtinger presentation, we can regard $f_m$ as a map $ \{ \ \mathrm{arcs \ of \ } D \ \} \ra \mathrm{Im}(\Upsilon_m) $.
We use \eqref{psidef} to define the product
\begin{equation}\label{fm+1}
\Phi_{m,j}(k) := \prod_{t : \ 1 \leq t \leq k } \phi^{\epsilon_t}_m \bigl( f_m(\alpha_t) , \ f_m (\beta_t)\bigr) \in \Upsilon_{m+1}( Q_{m+1}).
\end{equation}
According to \cite{CEGS}, this $\Phi_{m,j}(k) $ is called {\it the quandle cocycle invariant} (obtained from $p_m$).
\begin{prop}[{As a result in \cite[\S 5]{CEGS}}]\label{ppp2}
This $\Phi_{m,j}(k) $ is equal to $\Psi_{m} (j)$ in \eqref{fm+11}.
\end{prop}
We emphasize from the centrality of $Q_m$ that the sum formula is independent of the order of the crossings,
while the longitudes are seemingly non commutative
and that this $\Phi_{m,j}(k) $ is a diagrammatic computation of Milnor invariants.
However, the computation of $\Psi_{m,j}(k) $ is much faster than that of $\Phi_{m,j}(k) $ by definition.
In addition, we can similarly find a reduction of the higher $\bar{\mu}$-invariant in terms of quandle cocycle invariants,
although the formula is somewhat complicated.

\section{The first non-vanishing Milnor invariant of Milnor link.}\label{App3}
In this paper as in \S \ref{Sexa}, we emphasized that our computation of Milnor invariants does not need any description of longitudes $\mathfrak{l}_i$. However, in some cases, 
we can easily compute the invariants from describing $\mathfrak{l}_i$. 
As an example, we now compute the first non-vanishing Milnor invariant of the Milnor link.

Let $ E_m$ be the complement of Milnor link of $m$ components with $m>2$; see Figure \ref{bb}.
Choose the $k$-th longitude $\mathfrak{l}_k \in \pi_1(E_m )$ with $\mathrm{Ab}(\mathfrak{l}_{k})=0$.
We will determine the first non-vanishing Milnor invariant of $E_m$: the previous results are only for $ f_m(\mathfrak{l}_k )$ with $k=1$; see \cite{Mil2,HM}.

\begin{thm}\label{thm7} 
The invariant $ \Upsilon_m\circ f_m(\mathfrak{l}_k )_{( 1,m) }$ has the following form:
\begin{equation}\label{coco} (-1)^{m-k+1} \bigl[\bigl( [[\cdots[[\lambda^{(1)}_{1}, \lambda^{(2)}_{2}],\lambda^{(3)}_{3}]\cdots], \lambda_{k-1}^{(k-1)}]] \bigr),
\bigl( [[\cdots[[\lambda^{(m)}_k, \lambda^{(m-1)}_{k+1}], \lambda^{(m-2)}_{k+2}]\cdots],\lambda^{(k+1)}_{m}]\bigr) \bigr] .
\end{equation}
\end{thm}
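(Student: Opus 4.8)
The strategy is to compute $f_m(\mathfrak{l}_k)$ from the Wirtinger presentation of the Milnor link diagram in Figure \ref{bb}, and then to apply the unipotent Magnus embedding via Lemma \ref{lemapp1} and the bracket formula \eqref{lem9}. The Milnor link of length $m$ has the feature that its group has a presentation in which the $k$-th longitude is an iterated commutator of meridians; more precisely, I would first write down, directly from the diagram, the longitude $\mathfrak{l}_k$ as a word in $x_1,\dots,x_m$. The expected shape is that $\mathfrak{l}_k$ is (up to conjugacy and a meridian factor, which is killed upon projecting into $Q_m$) a product of a ``left part'' built from $x_1,\dots,x_{k-1}$ and a ``right part'' built from $x_k,\dots,x_m$, corresponding to the two strands of the clasp at the $k$-th component. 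Concretely, I would argue that in $F/\Gamma_m F$ one has
\begin{equation*}
f_m(\mathfrak{l}_k) = \bigl[\, u_k,\ v_k \,\bigr],
\end{equation*}
where $u_k = [[\cdots[[x_1,x_2],x_3]\cdots],x_{k-1}]$ lies in $\Gamma_{k-1}F$ and $v_k = [[\cdots[[x_k,x_{k+1}],x_{k+2}]\cdots],x_m]$ lies in $\Gamma_{m-k+1}F$, so that the commutator lands in $\Gamma_{(k-1)+(m-k+1)}F = \Gamma_m F$ and is nonzero only in $Q_m$. The sign $(-1)^{m-k+1}$ and the precise ordering of the indices inside $v_k$ will come from tracking crossing signs and orientations along the longitude in the diagram.

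Once the longitude is identified as the commutator $[u_k, v_k]$, the second half is formal. I would apply \eqref{ghgh} with $g = u_k \in \Gamma_{k-1}F$ (so $r = k$) and $h = v_k \in \Gamma_{m-k+1}F$ (so $s = m-k+1$, and $r+s = m+1$), but more efficiently use that $\Upsilon_m$ restricted to the center gives the bracket directly: by \eqref{lem9}, $\Upsilon_k(u_k)$ has $(1,k)$-entry equal to $[[\cdots[\lambda_1^{(1)},\lambda_2^{(2)}],\lambda_3^{(3)}]\cdots],\lambda_{k-1}^{(k-1)}]$, and similarly $\Upsilon_{m-k+1}(v_k)$ has top-right entry the iterated bracket $[[\cdots[\lambda_1^{(m)},\lambda_2^{(m-1)}]\cdots],\lambda_{m-k}^{(k+1)}]$; reindexing via $\kappa_k$ shifts the second factor's subscripts up by $k$, producing exactly the indices $k, k+1, \dots, m$ displayed in \eqref{coco}. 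Then formula \eqref{ghgh} (or equivalently Theorem \ref{thm1} applied in this setting) yields that the $(1,m)$-entry of $\Upsilon_m\circ f_m(\mathfrak{l}_k)$ is the outer bracket of these two elements, up to the sign.

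The main obstacle is the first step: extracting the correct word for $\mathfrak{l}_k$ from the diagram, including the exact nesting order of the brackets and the sign. This requires a careful inductive bookkeeping over the components of the Milnor link — essentially running the algorithm behind \eqref{qq} — and checking that the contributions from components $1,\dots,k-1$ and from $k+1,\dots,m$ organize into the two iterated commutators with the claimed (reversed) index pattern on the right-hand factor. I would handle this by induction on $m$, peeling off the outermost component, and using that lower-order terms (words in $\Gamma_{<m}F$ that are not simultaneously of the bracket form) die in $Q_m$ by Proposition \ref{ea211} and the vanishing of all Milnor invariants of $E_m$ of degree $< m$. The verification that $u_k$ and $v_k$ commute with the appropriate meridians modulo $\Gamma_m F$ — needed to discard the meridian factor $f_m(\mathfrak{m}_k)^{\pm1}$ — follows from the centralizer description recalled after Assumption $\mathcal{A}_m$. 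Everything after the longitude is pinned down is a routine application of Lemma \ref{lemapp1} and \eqref{ghgh}, with the reindexing $\kappa_k$ accounting for the superscript/subscript alignment in \eqref{coco}.
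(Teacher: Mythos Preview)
Your overall strategy---identify the longitude as an iterated commutator of meridians and then push it through the bracket formula \eqref{ghgh}---is exactly what the paper does (via Lemmas \ref{lem7} and \ref{lem8}). However, there is a genuine error in your candidate for $v_k$. You set $v_k = [[\cdots[[x_k,x_{k+1}],x_{k+2}]\cdots],x_m]$, which involves $m-k+1$ generators and hence lies in $\Gamma_{m-k+1}F$. Then $[u_k,v_k] \in \Gamma_{(k-1)+(m-k+1)}F = \Gamma_m F$, which is \emph{zero} in $Q_m = \Gamma_{m-1}F/\Gamma_m F$, not merely central. The correct second factor must involve only $x_{k+1},\ldots,x_m$ (so $m-k$ generators), as in the paper's Lemma \ref{lem7}: there $\mathfrak{l}_k$ is expressed as $[u_k, w_k^{-1}]^{-1}$ with $w_k = [[\cdots[[x_m,x_{m-1}^{-1}],x_{m-2}^{-1}]\cdots],x_{k+1}^{-1}] \in \Gamma_{m-k}F$. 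Your own subsequent line, where you write the top-right entry of $\Upsilon(v_k)$ with superscripts $(m),(m-1),\ldots,(k+1)$, already silently drops $x_k$ and reverses the nesting order---so your image formula matches \eqref{coco}, but it does not come from the $v_k$ you wrote down.

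The paper's mechanism for obtaining the longitude formula is also more concrete than ``peeling off the outermost component''. One chooses a solid torus $V\subset S^3$ containing the two end components of $E_{m+1}$, observes that $E_{m+1}\setminus V$ is isotopic to $E_m$, and tracks that the induced map $\iota:\pi_1(E_m)\to\pi_1(E_{m+1})$ sends the last meridian $x_m$ to the commutator $[x_{m+1}',x_m'^{-1}]$ while carrying longitudes to longitudes. This single substitution drives the induction and produces both the reversed ordering $m,m-1,\ldots,k+1$ and the inverses (hence the sign $(-1)^{m-k+1}$) automatically; a symmetric solid torus at the other end handles $\mathfrak{l}_{m-1}$ and $\mathfrak{l}_m$. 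Your sketch does not supply an analogous device, and without it the ``careful inductive bookkeeping'' you allude to is essentially the entire content of the lemma.
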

Milnor invariants $f_m(\mathfrak{l}_k)$ with $k \neq 1$ of some links can sometimes be computed only from  
$f_m(\mathfrak{l}_1 )$ by using ``symmetry cyclic relation" in \cite{Mil1,Mil2}.
However, we can verify 
from the basis of $F_{m-1}/F_{m}$ (see Appendix \ref{App})
that if $ m \geq 4$, 
the above $ \Upsilon_m\circ f_m(\mathfrak{l}_k )_{( 1,m) }$ with $k \neq 1$
cannot be computed only from $ \Upsilon_m\circ f_m(\mathfrak{l}_1 )_{( 1,m) }$ by the linear independence.

The theorem directly follows from Lemmas \ref{lem8}--\ref{lem7} below. That is, it suffices to prove the lemmas.

\begin{lem}\label{lem8} 
Let $x_1, \dots, x_m$ be the arcs in Figure \ref{bb}. 
Then, the $(1,m)$-entry of
\begin{equation}\label{co2c} (-1)^{m-k} \Upsilon_m\bigl( \bigl[ \bigl([[[\cdots[[x_1,x_2],x_3]\cdots],x_{k-1}]\bigr) , \ \bigl([[[\cdots[[x_{m},x_{m-1}^{-1}],x_{m-2}^{-1}]\cdots],x_{k+1}^{-1}]\bigr)^{-1} \bigr] \bigr) \end{equation}
is equal to  \eqref{coco}.
\end{lem}
\begin{proof} We can verify the equality by induction on $k$ and $m$.
\end{proof}
\begin{lem}\label{lem7}
Let $m\geq 3$, and $k \in \mathbb{N}$ be $ k < m-1$.
Fix the meridians $x_1, \dots, x_m \in \pi_1(E_m )$ as in Figure \ref{bb}.
Recall the abbreviation $[g,h]= ghg^{-1}h^{-1}$.
Then, in the $(m+1)$-th quotient $ \pi_1(E_m )/ \Gamma_{m+1}\pi_1(E_m ) $ the longitudes are presented by
\begin{eqnarray*}
&\mathfrak{l}_{k} &=\bigl[ \bigl([[[\cdots[[x_1,x_2],x_3]\cdots],x_{k-1}]\bigr) , \ \bigl([[[\cdots[[x_{m},x_{m-1}^{-1}],x_{m-2}^{-1}]\cdots],x_{k+1}^{-1}]\bigr)^{-1} \bigr]^{-1}, \\
&\mathfrak{l}_{m} &=[[[\cdots [[x_{1},x_{2}],x_{3}]\cdots],x_{m-2}],x_{m-1}],\\
&\mathfrak{l}_{m-1} &=[[[\cdots[[x_{1},x_{2}],x_{3}]\cdots],x_{m-2}],x_{m}].
\end{eqnarray*}
\end{lem}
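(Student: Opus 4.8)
\textbf{Proof plan for Lemma \ref{lem7}.}

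The plan is to compute the longitudes directly from the diagram of the Milnor link in Figure \ref{bb} using the Wirtinger presentation, and then to simplify the resulting words inside the nilpotent quotient $\pi_1(E_m)/\Gamma_{m+1}\pi_1(E_m)$. First I would recall the structure of the Milnor link: it is a ``chain'' in which component $k$ clasps with its neighbours, built so that each longitude is a nested commutator. The cases $\mathfrak{l}_m$ and $\mathfrak{l}_{m-1}$ are the extremal ones, where the longitude winds only around the arcs coming before it; for these I would read off the Wirtinger word for the longitude of the last (resp.\ second-to-last) component and recognise it, after cyclic reduction modulo $\Gamma_{m+1}$, as the left-collecting commutator $[[[\cdots[[x_1,x_2],x_3]\cdots],x_{m-2}],x_{m-1}]$ (resp.\ with $x_{m-1}$ replaced by $x_m$). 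The main tool here is that in $F/\Gamma_{m+1}F$ a word that is a product of conjugates reduces to a product of commutators of the appropriate weight, and any commutator of weight $m+1$ or more vanishes.

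For the generic longitude $\mathfrak{l}_k$ with $k<m-1$, I would proceed by induction on the ``distance'' of the clasping. The component $k$ links two sub-chains: the part built from $x_1,\dots,x_{k-1}$ on one side and the part built from $x_{m},x_{m-1},\dots,x_{k+1}$ on the other. Reading the Wirtinger relations along $\mathfrak{l}_k$, one sees that $\mathfrak{l}_k$ is a commutator $[u_k^{-1}, v_k^{-1}]^{-1}$ (equivalently $[v_k,u_k]$ up to the usual centrality manipulations) where $u_k$ is the iterated commutator of $x_1,\dots,x_{k-1}$ and $v_k$ is the iterated commutator of $x_m, x_{m-1}^{-1},\dots,x_{k+1}^{-1}$. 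The inductive step peels off one crossing at a time: passing a strand of component $k$ over the arc labelled $x_j$ multiplies the current partial longitude by a conjugate $x_j^{\pm1}(\cdot)x_j^{\mp1}$, and in the nilpotent quotient this is exactly the operation ``take the commutator with $x_j$''. Keeping careful track of orientations gives the signs (and the inverses $x_{m-1}^{-1},\dots$) on the second factor.

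The hard part will be the bookkeeping of signs and of which meridian conjugates which at each crossing along $\mathfrak{l}_k$ — i.e.\ verifying that the over/under and orientation data in Figure \ref{bb} produce precisely the nested commutator with the factors $x_{m}, x_{m-1}^{-1}, \dots, x_{k+1}^{-1}$ in that order, and the outermost bracket structure $[\,\cdot\,,\,\cdot\,]^{-1}$ rather than some cyclic variant. I would handle this by isotoping the link so that component $k$ bounds an obvious disk meeting the rest of the link transversally, reading the longitude as the boundary word of that disk (the total ``linking word''), and then checking the base cases $k=m-2$ and $k=m-1$ explicitly against the displayed formulas for $\mathfrak{l}_{m-1},\mathfrak{l}_m$. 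Once the pattern is pinned down at the bottom of the induction and the one-crossing step is established, the general formula follows; throughout, one uses freely that $[x_j,z]=[x_j,p_m(z)]$ and that brackets of weight exceeding $m$ are trivial in the $(m+1)$-th quotient, exactly as in the proof of Theorem \ref{thm1}.
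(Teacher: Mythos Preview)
Your plan works at the level of a fixed $m$, reading the Wirtinger word for each longitude and simplifying modulo $\Gamma_{m+1}$. The paper does something genuinely different: it inducts on the number of components $m$. The base case $m=3$ is Wirtinger as you propose, but the inductive step uses the self-similarity of the Milnor link. There is a solid torus $V\subset S^3$ containing the last two components of the $(m{+}1)$-component link so that $E_{m+1}\setminus V$ is isotopic to $E_m$; the inclusion induces $\iota:\pi_1(E_m)\to\pi_1(E_{m+1})$ with $\iota(x_i)=x_i'$ for $i<m$ and $\iota(x_m)=[x_{m+1}',x_m'^{-1}]$, and it carries $\mathfrak{l}_k$ to $\mathfrak{l}_k'$. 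Substituting $[x_{m+1}',x_m'^{-1}]$ for $x_m$ in the inductive formula visibly lengthens the right-hand nested commutator by one letter and pushes the error term one step deeper in the lower central series. A symmetric torus around the first two components handles $\mathfrak{l}_{m-1}$ and $\mathfrak{l}_m$. This sidesteps all crossing-by-crossing bookkeeping; your approach is more elementary but pays for it in exactly the sign-and-order accounting you flag as the hard part.

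One point in your plan needs repair before it can be carried out. The sentence ``passing a strand of component $k$ over the arc labelled $x_j$ multiplies the current partial longitude by a conjugate $x_j^{\pm1}(\cdot)x_j^{\mp1}$'' is not how longitudes are built: when component $k$ passes \emph{under} an arc with meridian $\gamma$, the partial longitude is multiplied on the right by $\gamma^{\pm1}$, not conjugated, and $\gamma$ is in general a conjugate of some $x_j$ rather than $x_j$ itself. The nested-commutator shape actually arises because each clasp contributes a pair $\gamma_1^{\epsilon}\gamma_2^{-\epsilon}$ with $\gamma_2$ a conjugate of $\gamma_1$, and one must recursively identify those conjugating words. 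That recursion is precisely what the paper's induction on $m$ packages cleanly via the substitution $x_m\mapsto[x_{m+1}',x_m'^{-1}]$.
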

Here, it is worth noticing that the Milnor
link with 3 components is merely the Borromean rings. In addition, Milnor links with more components are inductively built from the Borromean rings by the Bing double operation \cite{B}. The following proof is analogous to the building.
\begin{proof}
Consider the right hand sides as the elements in $\pi_1(E_m)$, and denote them by $\mathfrak{r}_k$, $\mathfrak{r}_m$ and $\mathfrak{r}_{m-1}$,respectively.
Let $g_k \in \pi_1(E_m )/ \Gamma_{m+2}\pi_1(E_m ) $ be $\mathfrak{r}_k^{-1 }\mathfrak{l}_k$.
Here, it is enough to show that $g_k$ lies in $\Gamma_{m}\pi_1(E_m)$ and
is contained in the normal closure, $\langle x_k \rangle $, of $x_k$.
The proof is by induction on $m$. 
Since the proof for $m=3$ can be directly obtained from the Wirtinger presentation, we may assume $m>3$.

First, we focus on $\mathfrak{l}_k $ with $k <m$.
Consider a canonical solid torus $V \subset S^3$, which contains the $m$-th and $(m+1)$-th components.
Since $ E_{m+1} \setminus V $ is isotopic to $ E_m$, the inclusion $ E_{m+1} \setminus V \hookrightarrow E_{m+1} $ induces $\iota: \pi_1(E_m ) \ra \pi_1(E_{m+1} ) $.
If we replace the meridians $ x_k$ in $ \pi_1(E_{m+1})$ by $x_k'$, 
we have $ \iota( x_m)= [x_{m+1}', x_m'^{-1}]$ from the Wirtinger presentation.
Further, notice that $\iota( \mathfrak{l}_k)= \mathfrak{l}_k'$.
Since $g_k \in \Gamma_{m+1}(\pi_1 (E_m))\cap \langle x_k \rangle $ by assumption, $g_k'=\iota (g_k)$ is contained in $ \Gamma_{m+2}(\pi_1 (E_{m+1}))\cap \langle x_k' \rangle$.
Hence, the presentation of $ \mathfrak{l}_{k}'$ is equal to $ \mathfrak{l}_{k}$ by
replacing $ x_m$ by $[x_{m+1}',x_m'^{-1}]$, which is exactly the desired one on $m+1$.

Finally, we examine $\mathfrak{l}_{m-1}$ and $\mathfrak{l}_m$.
Similarly, we can find a solid torus $V' \subset S^3$, which contains the first and second components of $E_{m+1}$, such that
the inclusion $ E_{m+1} \setminus V' \hookrightarrow E_{m+1} $ yields 
a homomorphism $\kappa :\pi_1(E_m) \rightarrow \pi_1(E_{m+1})$
such that $\kappa ( x_1) =[x_1',x_2']$ and $\kappa ( x_t) =x_{t+1}$ for $t>1$. The remaining part of the proof goes as before. 
\end{proof}

\vskip 1pc

\normalsize

Max Planck Institute for Mathematics,
Vivatsgasse 7, 53111, Bonn, Germany

\

\normalsize
DEPARTMENT OF MATHEMATICS TOKYO INSTITUTE OF TECHNOLOGY
2-12-1
OOKAYAMA
, MEGURO-KU TOKYO
152-8551 JAPAN

\end{document}